\newtheorem{theorem}{Theorem}[section]
\newtheorem{lemma}[theorem]{Lemma}
  \newtheorem{corollary}[theorem]{Corollary}
\newtheorem{definition}[theorem]{Definition}
\newtheorem{example}[theorem]{Example}
\newtheorem{remark}[theorem]{Remark}
\newtheorem{remarks}[theorem]{Remarks}
\numberwithin{equation}{section}
\def\N{\mathbb{N}}
\newcommand{\classX}{{\mathfrak X}}
\newcommand{\classY}{{\mathfrak Y}}
\def\core{\mathrm{Core}}
\def\opa{\mbox{\scriptsize\sf A}}
\def\opec{\mbox{\scriptsize\sf C}}
\def\opd{\mbox{\scriptsize\sf D}}
\def\ope{\mbox{\scriptsize\sf E}}
\def\opl{\mbox{\scriptsize\sf L}}
\def\opn{\mbox{\scriptsize\sf N}}
\def\opp{\mbox{\scriptsize\sf P}}
\def\opq{\mbox{\scriptsize\sf Q}} 
\def\opr{\mbox{\scriptsize\sf R}} 
\def\ops{\mathsf{s}}
\begin{document}

\title{Classes of Algebras and closure operations}

\author{I. S. Gutierrez}
\address{Department of Mathematics and Statistics, Universidad del Norte, Km 5 via a Puerto Colombia, Barranquilla - Colombia}
\curraddr{Department of Mathematics and Statistics, Universidad del Norte, Km 5 via a Puerto Colombia, Barranquilla - Colombia}
\email{isgutier@uninorte.edu.co}

\author{Anselmo Torresblanca-Badillo}
\address{Department of Mathematics and Statistics, Universidad del Norte, Km 5 via a Puerto Colombia, Barranquilla - Colombia}
\curraddr{Department of Mathematics and Statistics, Universidad del Norte, Km 5 via a Puerto Colombia, Barranquilla - Colombia}
\email{atorresblanca@uninorte.edu.co}
\thanks{ }

\author{David A. Towers}
\address{Department of Mathematics and Statistics, Lancaster University, , Lancaster - UK}
\curraddr{Department of Mathematics and Statistics, Lancaster University, , Lancaster - UK}
\email{d.towers@lancaster.ac.uk}
\thanks{ }

\subjclass[2010]{Primary 68R05; Secondary 05C69}

\date{\today}

\dedicatory{In memory of Prof. Dr. Klaus Doerk}

\keywords{Non-associative algebras, finite-dimensional algebra, solvable algebras, nilpotent algebras, supersolvable algebras, classes of algebras, formations, Schunck classes,  closure operations}

\begin{abstract}
The calculus of classes and closure operations has proved to be a useful tool in group theory and has led to a deep theory in the study of finite soluble groups. More recently, parallel theories have started to be developed in various varieties of algebras, such as Lie, Leibniz and Malcev algebras. This paper seeks to investigate the extent to which these later theories can be generalised to the variety of all non-associative algebras. 
\end{abstract}

\maketitle

\section{Introduction}
 In the early 1960s Philip Hall introduced a systematic use of closure operations into group theory (see \cite{Hall1}, \cite{Hall2}). This was followed by the development of the concept of a formation by Gasch\"{u}tz in 1963. Since then, these concepts have contributed significantly to the theory of finite soluble groups, as is expounded in \cite{Doerk-Hawkes} and \cite{Ballester-Ezquerro}.  
\par

In recent years, closure operators have been more widely applied in various mathematical systems. In particular, a parallel theory has been developed in the context of a universal algebra (see \cite{Shemetkov}, \cite{Shemetkov-Skiba}, \cite{Skiba}, for instance), and in various varieties of algebras, such as Lie algebras (see \cite{Barnes2}, \cite{B-G-H}, \cite{Stit}, for example), Leibniz algebras (see \cite{Barnes} and \cite{Barnes-x}, for example) and Malcev algebras (\cite{Stit2}). It is the purpose of this paper to investigate the extent to which some of these last three sets of results can be generalised to the variety of all non-associative algebras. 

In section 2 we introduce the concepts of classes of algebras and  of closure operations and present their basic properties. In section 3 we introduce a partial order on closure operations, define the join of a set of closure operations and show that the join of various such operations are again closure operations. In the fourth section the concepts of homomorph, formation, saturation, Schunck class and Fitting class are presented, and some equivalent conditions for a non-empty homomorph and for a non-empty formation to be saturated are proved. The final section is devoted to conclusions and ideas for further work.

Throughout this work, unless otherwise stated, all of the algebras considered are assumed to be non-associative algebras over a field $K$.

 An algebra is said to be \texttt{simple} if it has only the trivial ideals 0 and $A$ and it is not the one-dimensional algebra. The \texttt{core} of a subalgebra $H$ of $A$ with respect to $A$, denoted by $\core_A(H)$, is defined to be the largest ideal of $A$ contained in $H$. An algebra  is called \texttt{primitive} if it has a core-free maximal subalgebra. 

If $S\subseteq A$, then the ideal of $A$ \texttt{generated} by  $S$ or the \texttt{ideal closure} of $S$ in $A$ is defined to be the unique smallest ideal of $A$ containing $S$. That is, the intersection of all ideals of $A$ which contain $S$, and it is denoted like in group theory by $S^A$.  It is straightforward to check that   
\begin{equation}\label{idealclosure}
	S^A = \bigg\{\sum_{s\in S}\bigg(\sum_{k=1}^{n_s} a_ksy_k\bigg) \mid n_s\in \N, \ x_k, y_k\in A  \bigg\}.
\end{equation}

A subspace $I$ is called a \texttt{subideal} of $A$  if there exist a chain of subalgebras
\begin{equation}
I = I_0 < I_1 < \cdots < I_n = A,
\end{equation}
where $I_j$ is an ideal of $I_{j+1}$ for each $0\leq j\leq n-1$.

The Frattini subalgebra of $A$, denoted by $F(A)$, is defined as the intersection of the maximal subalgebras of $A$; if $A$ has no maximal subalgebras then it is defined to be $A$. E. I. Marshall \cite{E. I. Marshall} proved that for finite-dimensional Lie algebras over an algebraically closed field of characteristic zero the Frattini subalgebra is an ideal and D. A. Towers \cite{Towers1} showed that the restriction of algebraic closure can be removed to obtain that it is a characteristic ideal. However, in general, the Frattini subalgebra need not be an ideal, so we define the Frattini ideal, $\Phi(A)$, to be equal to $\core_A(F(A))$.

Set $A^1 = A^{(1)} = A$, and then for $n\geq 1$ define inductively
\begin{equation}
A^{n+1} = \sum_{i+j=n+1} A^i A^j,
\end{equation}
and 
\begin{equation}
	A^{(n+1)} =A^{(n)}A^{(n)} = (A^{(n)})^2.
\end{equation}
The subset $A^n$ is called the $n$-th power of $A$. The chain of subsets
\begin{equation}
A^1 \supseteq A^2\supseteq \cdots\supseteq A^n \supseteq \cdots
\end{equation}
is a chain of ideals of $A$. A non-associative algebra $A$ is called \texttt{nilpotent} if $A^n = 0$ for some $n$ and  \texttt{solvable} if $A^{(r)} = 0$ for some $r$. The smallest natural number $n$ (respectively $r$) with this property is called the \texttt{nilpotency index}  (respectively \texttt{solvability index}) of $A$. Note that the elements of the chain
\begin{equation}
	A^{(1)} \supseteq A^{(2)} \supseteq \cdots\supseteq A^{(n)} \supseteq \cdots
\end{equation}
beyond $A^{(2)}$, are not, generally speaking, ideals in $A$.

Motivated by the concept of supersolubility in group theory a similar concept has been introduced in Lie algebras, where normal subgroups with cyclic factors
were replaced by ideals with one dimensional factors. In general, we say that an algebra $A$ is called \texttt{supersolvable} if it is solvable and has a flag of ideals $${0}=A_0\leq A_1\leq \ldots \leq A_n=A$$
where $\dim A_i=i$ for $0 \leq i \leq n$. For a general non-associative algebra $A$, the added condition that $A$ be solvable is needed, as the one-dimensional algebra spanned by a single idempotent has such a flag, trivially, but is not solvable.

 If $A$ is an algebra over a field $K$, an \texttt{$A$-bimodule} is a vector space $M$ over $K$ with bilinear mappings
$$ A\times M \rightarrow M, (x,m)\mapsto xm  \hbox{ and } M\times A \rightarrow M, (m,x)\mapsto mx.
$$
The \texttt{split null extension} of $A$ by $M$, $A\rtimes M$, is the algebra over $K$ with underlying vector space $A\oplus M$ and multiplication
$$ (x+m)(y+n)=xy+(xn+my) \hbox{ for all } x,y\in A, m,n\in M. $$
The \texttt{regular bimodule}, $Reg(A)$, is the underlying vector space of $A$ considered as a bimodule with $mx$ and $xm$ as the multiplication in $A$.

\section{Classes of Algebras and Closure Operations}

A \texttt{class} of algebras $\mathfrak{X}$ is a collection of algebras with the property that if $A\in \mathfrak{X}$, then every algebra isomorphic to $A$ belongs to $\mathfrak{X}$. The algebras which belong to a class $\mathfrak{X}$ are called $\mathfrak{X}$-algebras.

Following K. Doerk and T. O. Hawkes \cite{Doerk-Hawkes} and A. Ballester-Bolinches and L. Ezquerro \cite{Ballester-Ezquerro}, we denote the empty class of algebras by $\emptyset$ and we use Gothic font when a single capital letter denotes a class of algebras.

The following are examples of specific classes  of algebras:
\begin{align*}
\emptyset &  \ \ \ \text{denotes the empty class of algebras} \\
\mathfrak{A} &  \ \ \ \text{denotes the class of all abelian algebras} \\
\mathfrak{N} &  \ \ \ \text{denotes the class of all nilpotent algebras} \\
\mathfrak{N}_c &  \ \ \ \text{denotes the class of all nilpotent algebras of index $\leq $ c, \ $c\in \N$} \\
\mathfrak{S} & \ \ \ \text{denotes the class of all solvable algebras} \\
\ddot{\mathfrak U} &  \ \ \ \text{denotes the class of all supersolvable algebras} \\
\mathfrak{E} & \ \ \ \text{denotes the class of all finite dimensional algebras}.
\end{align*}

Algebra classes are partially ordered by inclusion and we write $\mathfrak{X} \subseteq \mathfrak{Y}$ to denote that the class $\mathfrak{X}$ is a subclass of the class $\mathfrak{Y}$.

A map which sends a class of algebras to a class of algebras is called a class map. Among class maps are the so-called \texttt{closure operations}, which play a central role in studying of algebra classes. 

A class map $\opec$ assigning to each class of algebras $\mathfrak{X}$ a class of algebras $\opec\mathfrak{X}$ is called an \texttt{operation}, if it holds the following conditions:
\begin{enumerate}
\item[(C1)] $\opec \emptyset = \emptyset$ \ and
\item[(C2)] $\mathfrak{X} \subseteq \opec \mathfrak{X} \subseteq \opec \mathfrak{Y}$, providing $\mathfrak{X} \subseteq \mathfrak{Y}$. (We say $\opec$ is expanding and monotonic respectively).
\end{enumerate}
An operation $\opec$ is called a \texttt{closure operation}, if it is idempotent, that is, if
\begin{enumerate}
\item[(C3)]  $\opec \mathfrak{X} = \opec(\opec \mathfrak{X})$, for all class $\mathfrak{X}$. 
\end{enumerate}

\begin{definition}\label{1.5}
Let $\mathfrak{X}$ be a class of algebras. Then the following are important examples of class maps:
\begin{align*}
\ops \mathfrak{X} & = (A\mid A\leq K, \ \text{for some} \ K \in \mathfrak{X})\\
\ops_n \mathfrak{X} & = (A\mid A \mbox{ a subideal of } K \ \text{for some} \ K\in \mathfrak{X})\\
\bar{\ops}_n \mathfrak{X} & = (A\mid K \mbox{ a subideal of } A, \ \text{for some} \ K\in \mathfrak{X})\\
\opd_0 \mathfrak{X} & =  (A\mid A= \bigoplus\nolimits_{j=1}^r I_j \ \mbox{with each}\ I_j\in \mathfrak{X})\\ 
\opq \mathfrak{X} & =  (A\mid \exists \, H \in \mathfrak{X} \ \text{and} \ \exists \ \varphi : H \longrightarrow A \ \text{an epimorphism})\\ 
\opr_0 \mathfrak{X} & = (A\mid \exists\ I_1, \ldots, I_r\unlhd A\ \mbox{with}\ A/I_j \in \mathfrak	X\ \mbox{and}\ \bigcap\nolimits_{j=1}^r I_j = 0)\\
\opr\mathfrak{X} & = (A\mid \exists \ (I_j)_{j\in J},\ I_j\unlhd A\ \mbox{with}\ A/I_j \in \mathfrak	X\ \mbox{for all $j\in J$ and}\ \bigcap\nolimits_{j\in J} I_j = 0)\\
\opn_0 \mathfrak{X} & =  (A\mid \exists \, K_1, \ldots, K_r	\mbox{ subideals of } A\ \mbox{with}\ K_i \in \mathfrak{X}\ \mbox{and}\ A = \sum\nolimits_{i=1}^r K_i )\\ 
\opn \mathfrak{X} & =  (A\mid \exists (I_j)_{j\in J}, I_j \mbox{ a subideal of $A$}\ \mbox{with}  \ I_j \in \mathfrak{X} \ \mbox{for all $j\in J$ and } A = \sum\nolimits_{j\in J} I_j )\\ 
\ope_\Phi \mathfrak{X}  & =  (A\mid \exists\ N \unlhd A\	\mbox{with}\ N \leq \Phi(A)\ \mbox{and}\ A/N \in \mathfrak{X})\\
\opp \mathfrak{X} &= (A \mid \mbox{\scriptsize\sf Q}(A) \cap \mathfrak P \subseteq \mathfrak{X}).
\end{align*}
\end{definition}

Note that, if $\mathfrak{X}$ is a class of algebras, then $\ops \mathfrak{X}$ is the class of all subalgebras of $\mathfrak{X}$-algebras, and $\opq \mathfrak{X} $ is the class of all quotients of $\mathfrak{X}$-algebras. 

A class $\mathfrak{X}$ is called $\opec$-closed if $\mathfrak{X} = \opec\mathfrak{X}$. If $\opec$ is a closure operation, then is $\opec\mathfrak{X}$ a $\opec$-closed class, for all classes $\mathfrak{X}$. It is clear from (C1) and (C2) that the classes $\emptyset$ and $\mathfrak{E}$ are $\opec$-closed for any operation $\opec$. 

In this terminology $\ops \mathfrak{X} = \mathfrak{X}$ means that $\mathfrak{X}$ is closed under subalgebras, $\opq \mathfrak{X} = \mathfrak{X}$ indicate that $\mathfrak{X}$ is quotient-closed, whereas $\opd_0 \mathfrak{X} = \mathfrak{X}$ implies that $\mathfrak{X}$ is closed under forming direct sums.

\begin{definition}
If $\mathfrak{X}$ and $\mathfrak{Y}$ are two classes of algebras, we define their product $\mathfrak{X} \mathfrak{Y}$ as 	follows: 
\begin{equation}\label{class-product}
\mathfrak{XY} = (A\mid \exists \ N\unlhd A, \ N\in \mathfrak{X} \ \mbox{with} \ A/N \in \mathfrak{Y})
\end{equation}
Algebras in the class $\mathfrak{XY}$ are called $\mathfrak{X}$-by-$\mathfrak{Y}$ algebras.
If $\mathfrak{X} = \emptyset$ or $\mathfrak{Y} = \emptyset$, then we define $\mathfrak{XY} = \emptyset$.

For powers of a class, we set $\mathfrak{X}^0 = (1)$, and for $n\in\N$ make the inductive definition $\mathfrak{X}^n = (\mathfrak{X}^{n-1})\mathfrak{X}$. An algebra in $\mathfrak{X}^2$ is called \texttt{meta}-$\mathfrak{X}$. 
\end{definition}
 
\begin{remark}
This product is neither associative nor commutative in general.

We will find conditions on $\mathfrak{X}$ and $\mathfrak{Y}$ such that $\mathfrak{X(YZ)} =\mathfrak{(XY)Z}$ does hold.
\end{remark}

\begin{lemma}
With the exception of $\opp$, all the class maps  in definition \ref{1.5} are closure operations. 
\end{lemma}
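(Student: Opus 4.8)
The strategy is to verify conditions (C1), (C2) and (C3) for each of the ten class maps $\ops$, $\ops_n$, $\bar{\ops}_n$, $\opd_0$, $\opq$, $\opr_0$, $\opr$, $\opn_0$, $\opn$, $\ope_\Phi$ in turn, treating them in four natural families: the subalgebra-type maps ($\ops$, $\ops_n$, $\bar{\ops}_n$), the quotient-type maps ($\opq$, $\opr_0$, $\opr$), the sum/direct-sum-type maps ($\opd_0$, $\opn_0$, $\opn$), and the Frattini map $\ope_\Phi$. In every case (C1) is immediate, since none of the defining existential conditions can be met when the witnessing class is empty, and (C2) is essentially a formality: expanding follows by taking the trivial witness ($A\leq A$, $A$ a subideal of itself, $A=A$ as a one-term sum, $0\unlhd A$ with $A/0\cong A$, $0\leq\Phi(A)$ with $A/0\cong A$), and monotonicity is clear because enlarging $\mathfrak X$ to $\mathfrak Y$ only enlarges the pool of admissible witnesses. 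So the real content is (C3), idempotency.

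For the subalgebra family, idempotency of $\ops$ is transitivity of ``$\leq$''; for $\ops_n$ and $\bar{\ops}_n$ it is the fact that a subideal of a subideal is a subideal — one concatenates the two defining chains $I=I_0<\cdots<I_n=K$ and $K=K_0<\cdots<K_m=A$ into a single chain from $I$ to $A$, using that each $I_j\unlhd I_{j+1}$ and each $K_i\unlhd K_{i+1}$. For the quotient family, idempotency of $\opq$ is the composition of epimorphisms; for $\opr_0$ and $\opr$ one takes $A$ with ideals $(I_j)$ such that $\bigcap_j I_j=0$ and each $A/I_j\in\opr(\mathfrak X)$, picks for each $j$ ideals $(J_{j,k}/I_j)$ of $A/I_j$ with zero intersection and $(A/I_j)/(J_{j,k}/I_j)\cong A/J_{j,k}\in\mathfrak X$, and observes that the full family $(J_{j,k})_{j,k}$ consists of ideals of $A$ with intersection $0$ (by the correspondence theorem, and since the $I_j$ already meet in $0$), so $A\in\opr\mathfrak X$; the $\opr_0$ case is the same with all index sets finite. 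For the sum family, $\opd_0$ is idempotent because a direct sum of direct sums is a direct sum, and $\opn_0$, $\opn$ because if $A=\sum_i K_i$ with each $K_i$ a subideal of $A$ lying in $\opn\mathfrak X$, and each $K_i=\sum_j L_{i,j}$ with $L_{i,j}$ a subideal of $K_i$ in $\mathfrak X$, then each $L_{i,j}$ is a subideal of $A$ (subideal transitivity again) and $A=\sum_{i,j}L_{i,j}$.

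The step I expect to be the main obstacle is idempotency of $\ope_\Phi$, since this is the only map whose definition invokes the Frattini ideal $\Phi(A)$, and $\Phi$ does not behave as transparently under quotients as ``$\leq$'' or ``$\unlhd$'' do. Concretely, given $A$ with $N\unlhd A$, $N\leq\Phi(A)$ and $A/N\in\ope_\Phi\mathfrak X$, one gets $M/N\unlhd A/N$ with $M/N\leq\Phi(A/N)$ and $(A/N)/(M/N)\cong A/M\in\mathfrak X$; to conclude $A\in\ope_\Phi\mathfrak X$ one must show $M\leq\Phi(A)$. This requires the fact that if $N\leq\Phi(A)$ then $\Phi(A/N)=\Phi(A)/N$, equivalently that maximal subalgebras of $A$ containing $N$ correspond to maximal subalgebras of $A/N$ and that every maximal subalgebra of $A$ contains $N$ when $N\leq\Phi(A)$ — the latter being exactly the defining property of the (ideal) Frattini. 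I would state and prove this containment $\Phi(A)/N\subseteq\Phi(A/N)$ (which suffices here) as the one genuine sub-lemma, using that $F(A)=\bigcap\{M: M \text{ maximal in } A\}$ and $\core_A$ commutes appropriately with the projection $A\to A/N$; the reverse behaviour is not needed. The remaining maps contribute only bookkeeping, so once this sub-lemma is in hand the proof is a routine case check.
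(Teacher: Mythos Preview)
Your plan is essentially the same as the paper's: dismiss (C1) and (C2) as routine, then verify idempotency operator by operator using exactly the mechanisms you list (transitivity of $\leq$ and of subideals, composition of epimorphisms, re-indexing of double families of ideals/summands, and the Frattini identity $\Phi(A/N)=\Phi(A)/N$ when $N\leq\Phi(A)$). The paper does not group the operators into families but the individual arguments are identical in content.

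There is, however, one slip in your $\ope_\Phi$ discussion. You correctly identify that the key fact is $\Phi(A/N)=\Phi(A)/N$ for $N\leq\Phi(A)$, but then assert that the single inclusion $\Phi(A)/N\subseteq\Phi(A/N)$ ``suffices here''. It does not: from $M/N\leq\Phi(A/N)$ you need to deduce $M\leq\Phi(A)$, which requires the \emph{opposite} inclusion $\Phi(A/N)\subseteq\Phi(A)/N$. Your sketch of the proof of the equality (correspondence of maximal subalgebras, plus the observation that $N\leq\Phi(A)\leq F(A)$ forces $N$ into every maximal subalgebra, whence $F(A/N)=F(A)/N$, and then the cores match by the ideal correspondence) is fine and yields both inclusions at once, so the fix is simply to drop the parenthetical claim and use the full equality, exactly as the paper does.
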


\begin{proof}
It is very easy to verify that 
all the class maps are expanding and monotonic. We show now that all this class maps are idempotent. Notice that, if $\opec$ is an expanding class map, then $\opec\mathfrak{X}\subseteq \opec(\opec\mathfrak{X})$, for all class $\mathfrak{X}$. Therefore it is sufficient to demonstrate that $\opec(\opec\mathfrak{X}) \subseteq \opec\mathfrak{X}$, with $\opec$ the above defined class maps.
\begin{enumerate}[(1)]
\item  Operator $\ops$. Let $A\in \ops(\ops \mathfrak{X})$. Then there exists $H\in \ops \mathfrak{X}$ such that $A\leq H$. Similarly, there is an algebra $V\in \mathfrak{X}$ with $H\leq V$. Then we have $A\leq H \leq V$, with $V\in \mathfrak{X}$; that is $A\in \ops \mathfrak{X}$.

\item  Operator $\ops_n$. Let $A\in \ops_n(\ops_n \mathfrak{X})$. Then there exist $H\in \ops_n \mathfrak{X}$ and $K\in \mathfrak{X}$ such that $A$ is a subideal of $H$ and $H$ is a subideal of $K$. This proves that $A$ is a subideal of  $K$ with $K\in \mathfrak{X}$; that is, $A\in \ops_n \mathfrak{X}$.

\item Operator $\bar{\ops}_n$. Let $A\in \bar{\ops}_n(\bar{\ops}_n\mathfrak{X})$. Then there exist $H\in \bar{\ops}_n\mathfrak{X}$ and $V\in \mathfrak{X}$  such that $V$ is a subideal of $H$ and $H$ is a subideal of $A$. This proves that $A\in \bar{\ops}_n\mathfrak{X}$. 

\item  Operator $\opd_0$. Let $A\in \opd_0(\opd_0 \mathfrak{X})$. Then $A= \bigoplus_{i=1}^r K_i$ with $K_i\in \opd_0\mathfrak{X}$, $i=1,\ldots, r$. Therefore 
each algebra $K_i$ has the form $K_i = \bigoplus_{j=1}^{r_i} N_{ij}$ with $N_{ij}\in \mathfrak{X}$. Since $\bigoplus_{i=1}^{r} \bigoplus_{j=1}^{r_i} N_{ij} = \bigoplus_{i=1}^r N_i = A$, it follows that $A\in \opd_0 \mathfrak{X}$.

\item  Operator $\opq$. Let $A\in \opq(\opq \mathfrak{X})$. Then there exist algebras $H\in \opq \mathfrak{X}$ and $V\in \mathfrak{X}$ and epimorphisms $\varphi : H\longrightarrow A$ and $\psi : V\longrightarrow H$. Then we have an epimorphism $\varphi\psi : V\longrightarrow A$, with $V\in \mathfrak{X}$; that is $A\in \opq \mathfrak{X}$.

\item Operator $\opr_0$.  Let $A\in \opr_0(\opr_0 \mathfrak{X})$. Then $A$ has ideals $N_1,\ldots, N_r$ such that $A/N_i\in \opr_0 \mathfrak{X}$ and $\bigcap_{i=1}^r N_i = 0$. Therefore each algebra $A/N_i$ has ideals $K_{ij}/N_i$, $j=1,\ldots, r_i,$ such that $A/K_{ij} \cong (A/N_i)/(K_{ij}/N_i)$ and $\bigcap_{j=1}^{r_i} K_{ij} = N_i$. Since $\bigcap_{i=1}^{r}\bigcap_{j=1}^{r_i} K_{ij} = \bigcap_{i=1}^r N_i = 0$, it follows that $A\in \opr_0 \mathfrak{X}$.

\item Operator $\opr$. Is similar to (6).

\item Operator $\opn_0$. Let $A\in \opn_0(\opn_0 \mathfrak{X})$. Then $A$ has subideals $K_1,\ldots, K_r$ such that $K_i\in \opn_0 \mathfrak{X}$ and
$A = \sum_{i=1}^r K_i$. Therefore each subalgebra $K_i$ has subideals $N_{ij}$, $j=1,\ldots, r_i$ with $N_{ij}\in  \mathfrak{X}$ and $K_i = \sum_{j=1}^{r_i} N_{ij}$. Since $\sum_{i=1}^{r} \sum_{j=1}^{r_i} N_{ij} = \sum_{i=1}^r N_i = A$, it follows that $A\in \opn_0 \mathfrak{X}$.

\item Operator $\opn$. Is similar to (8).

\item  Operator $\ope_{\Phi}$.  Let $A\in \ope_{\Phi}(\ope_{\Phi} \mathfrak{X})$. Then $A$ has an ideal $N\leq \Phi(A)$ such that $A/N\in \ope_{\Phi} \mathfrak{X}$. Consequently $A/N$ possesses an ideal $K/N\leq \Phi(A/N)$ such that $A/K \cong (A/N)/(K/N) \in \mathfrak{X}$. Since $\Phi(A/N) = \Phi(A)/N$, we have $K\leq \Phi(A)$. Therefore $A\in \ope_{\Phi}\mathfrak{X}$.
\end{enumerate}
\end{proof}

\begin{remark}
Let $\opec$ be a closure operation. If $\{\mathfrak{X}_j\}_{j\in J}$ is a family of $\opec$-closed algebras classes, it follows immediately that the class $\bigcap_{j\in J} \mathfrak{X}_j$ is also $\opec$-closed.
\end{remark}

A product of operations is defined as follows:
	
\begin{definition}
Let $\opec_1$ and $\opec_2$ be operations. The product $\opec_1 \opec_2$  is defined by composition thus $(\opec_1 \opec_2)\mathfrak{X} = \opec_1 (\opec_2 \mathfrak{X})$, for all classes $\mathfrak{X}$.
\end{definition}

\begin{remark}
If $\opec_1$ and $\opec_2$ are closure operations, then $\opec_1 \opec_2$  need not be, in general, a closure operation. For example, if we define the class maps $\ope$, and $\opl$ as follows:
\begin{align*}
\ope \mathfrak{X}  &= (A\mid \exists\  0=I_0 \unlhd I_1\unlhd \cdots \unlhd I_n\unlhd A\ \mbox{with}\ I_{j+1}/I_j \in \mathfrak	X \ \mbox{for all}\ j=0,\ldots, n-1)\\
\opl\mathfrak{X}  &= (A\mid \forall \,  S\subset A, \ S \ \text{finite} \ \exists \, H\leq A, \ \text{with} \ H\in \mathfrak	X\ \mbox{and}\ S\subseteq H)
\end{align*}
It is easy to show that they are closure operations, and in context of infinite-dimensional Lie algebras, $\ope \opl \mathfrak{A} = \ope\mathfrak{A}$, whereas $\ope \opl \ope \opl \mathfrak{A} = \ope \opl \ope\mathfrak{A}$ contains non-solvable algebras, for instance the direct sum of a set of solvable algebras of unbounded derived length. 
\end{remark}

\begin{remark}
Let $\opec$ be a closure operation. Conditions (C2) and (C3) imply that  $\opec \mathfrak{X} $ is the uniquely determined, smallest $\opec$-closed class that contains the class $\mathfrak{X}$. 
\end{remark}

\begin{remark}
Let $\opec$ be a closure operation. If $\{\mathfrak{X}_j\}_{j\in J}$ is a family of $\opec$-closed algebras classes it follows immediately that the class $\bigcap_{j\in J} \mathfrak{X}_j$ is also $\opec$-closed.
\end{remark}

Some elementary closure properties of class product are discussed now.

\begin{lemma}\label{Direct-product}
Given algebras $A_1, A_2, H$ and epimorphisms $\mu_i : A_i\longrightarrow H$, let be $N_i := \ker(\mu_i)$. Then 
\begin{equation}\label{PataGallina}
A_1 \curlywedge A_2 := \{(a_1,a_2) \mid a_i\in A_i, \ \mu_1(a_1) = \mu_2(a_2)\}
\end{equation}
is a subalgebra of $A_1\oplus A_2$. Furthermore, there exists epimorphisms $\alpha_i : A_1 \curlywedge A_2 \longrightarrow A_i$ ($i=1, 2$) with 
\[\ker(\alpha_1) = \{(0,a_2) \mid a_2\in N_2\} \cong N_2\]
	and 
\[\ker(\alpha_2) = \{(a_1,0) \mid a_1\in N_1\}\cong N_1.\]
In addition there is an epimorphism $\mu : A_1 \curlywedge A_2\longrightarrow H$ with
\[\ker(\mu) = \ker(\alpha_1) \oplus \ker(\alpha_2). \]
\end{lemma}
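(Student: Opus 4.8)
The plan is to construct everything explicitly and verify the required properties directly, since this is a "pullback" (fibre product) construction and all the claimed maps are the obvious ones. First I would check that $A_1 \curlywedge A_2$ is a subalgebra of $A_1 \oplus A_2$: it is clearly a subspace, being the kernel of the linear map $(a_1,a_2) \mapsto \mu_1(a_1) - \mu_2(a_2)$ from $A_1 \oplus A_2$ to $H$, and it is closed under multiplication because if $\mu_1(a_1) = \mu_2(a_2)$ and $\mu_1(b_1) = \mu_2(b_2)$ then, using that the $\mu_i$ are algebra homomorphisms, $\mu_1(a_1 b_1) = \mu_1(a_1)\mu_1(b_1) = \mu_2(a_2)\mu_2(b_2) = \mu_2(a_2 b_2)$, so $(a_1 b_1, a_2 b_2) = (a_1,a_2)(b_1,b_2) \in A_1 \curlywedge A_2$. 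Note non-associativity plays no role here — only bilinearity and the homomorphism property are used.

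Next I would define $\alpha_i : A_1 \curlywedge A_2 \to A_i$ as the restriction of the projection $(a_1,a_2) \mapsto a_i$; this is an algebra homomorphism as a restriction of the projection $A_1 \oplus A_2 \to A_i$. To see $\alpha_1$ is surjective, given $a_1 \in A_1$, since $\mu_2$ is an epimorphism there is $a_2 \in A_2$ with $\mu_2(a_2) = \mu_1(a_1)$, hence $(a_1,a_2) \in A_1 \curlywedge A_2$ maps to $a_1$; symmetrically for $\alpha_2$. The kernel of $\alpha_1$ consists of pairs $(0,a_2)$ with $\mu_2(a_2) = \mu_1(0) = 0$, i.e. $a_2 \in N_2$, and the map $(0,a_2) \mapsto a_2$ is an isomorphism onto $N_2$; the computation for $\ker(\alpha_2)$ is identical.

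Then I would define $\mu : A_1 \curlywedge A_2 \to H$ by $\mu(a_1,a_2) = \mu_1(a_1) = \mu_2(a_2)$ (well-defined precisely by the defining condition of the fibre product); it is a homomorphism since $\mu = \mu_1 \circ \alpha_1$, and it is surjective since $\mu_1$ and $\alpha_1$ are. Finally, $(a_1,a_2) \in \ker(\mu)$ iff $\mu_1(a_1) = \mu_2(a_2) = 0$ iff $a_1 \in N_1$ and $a_2 \in N_2$, so $\ker(\mu) = \{(a_1,a_2) \mid a_1 \in N_1, a_2 \in N_2\} = N_1 \oplus N_2$ as a subspace of $A_1 \curlywedge A_2$; identifying $\ker(\alpha_1) = 0 \oplus N_2$ and $\ker(\alpha_2) = N_1 \oplus 0$ inside $A_1 \curlywedge A_2$, this is exactly $\ker(\alpha_1) \oplus \ker(\alpha_2)$ (an internal direct sum, since $(0 \oplus N_2) \cap (N_1 \oplus 0) = 0$).

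There is no real obstacle here — the statement is essentially the standard pullback lemma transported to the non-associative setting, and the only thing one must be slightly careful about is that "$\oplus$" in the conclusion $\ker(\mu) = \ker(\alpha_1) \oplus \ker(\alpha_2)$ should be read as an internal direct sum of subspaces (indeed ideals, by the epimorphism $\alpha_i$) of $A_1 \curlywedge A_2$, and to keep track that all the displayed maps respect multiplication, which follows automatically because each is built from projections and the given homomorphisms $\mu_i$.
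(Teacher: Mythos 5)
Your proposal is correct and follows essentially the same route as the paper: restrict the coordinate projections to get the $\alpha_i$, compute their kernels as $0\oplus N_2$ and $N_1\oplus 0$, and define $\mu(a_1,a_2)=\mu_1(a_1)=\mu_2(a_2)$. You in fact supply more detail than the paper does (closure under multiplication, surjectivity of the $\alpha_i$, and the internal-direct-sum reading of $\ker(\mu)$ are all left as ``clear'' or ``evident'' there), and every step checks out.
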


\begin{proof}
It is clear that $A_1 \curlywedge A_2$ is a subalgebra of $A_1\oplus A_2$. Let $\pi_i : A_1\oplus A_2 \longrightarrow A_i$ the homomorphism defined by 
	\[\pi_i(a_1,a_2):= a_i.\]
Then the restriction of $\pi_i$ on $A_1 \curlywedge A_2$ is an epimorphism $\alpha_i$ with
\begin{align*}
\ker(\alpha_1) & = \{(a_1,a_2) \mid a_i\in A_i, \ \mu_1(a_1) = \mu_2(a_2), \ a_1=0  \}\\
& = \{(0,a_2) \mid a_i\in A_i, \ 0=\mu_1(a_1) = \mu_2(a_2)  \}\\
& = \{(0,a_2) \mid a_2\in \ker(\mu_2)  \}\\
&= 0\oplus N_2.
\end{align*}
On the other hand $\mu(a_1,a_2):= \mu_1(a_1) = \mu_2(a_2)$ defines an epimorphism $\mu$ from   $A_1 \curlywedge A_2$ to $H$. Evidently $\ker(\mu) = \ker(\alpha_1) \oplus \ker(\alpha_2)$.
\end{proof}

\begin{lemma}
If $\mathfrak{X}$ and $\mathfrak{Y}$ are classes of algebras, then $\mathfrak{Y}(\opq \mathfrak{X}) \subseteq \opq (\mathfrak{Y}\mathfrak{X})$. 
\end{lemma}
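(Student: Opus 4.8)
The plan is to realise $A$ as a quotient of a fibre product, using Lemma~\ref{Direct-product} as a pullback construction. First I would dispose of the degenerate cases: if $\mathfrak{Y} = \emptyset$ then $\mathfrak{Y}(\opq\mathfrak{X}) = \emptyset$ by the definition of the class product; and if $\mathfrak{X} = \emptyset$ then $\opq\mathfrak{X} = \opq\emptyset = \emptyset$ by (C1), so again $\mathfrak{Y}(\opq\mathfrak{X}) = \emptyset$. In either case the asserted inclusion is trivial, so from now on I assume both classes non-empty.

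Now let $A \in \mathfrak{Y}(\opq\mathfrak{X})$. By definition of the class product there is an ideal $N \unlhd A$ with $N \in \mathfrak{Y}$ and $A/N \in \opq\mathfrak{X}$; write $\pi : A \longrightarrow A/N$ for the canonical projection, which is an epimorphism. Since $A/N \in \opq\mathfrak{X}$, there is an algebra $X \in \mathfrak{X}$ and an epimorphism $\psi : X \longrightarrow A/N$. I would then apply Lemma~\ref{Direct-product} with $A_1 = A$, $\mu_1 = \pi$, $A_2 = X$, $\mu_2 = \psi$ and $H = A/N$. This yields the subalgebra $B := A \curlywedge X$ of $A \oplus X$ together with epimorphisms $\alpha_1 : B \longrightarrow A$ and $\alpha_2 : B \longrightarrow X$, where $\ker(\alpha_2) = \{(a,0) \mid a \in \ker\pi\} = \{(a,0) \mid a \in N\} \cong N$.

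It then remains to check that $B \in \mathfrak{Y}\mathfrak{X}$. Since a class is closed under isomorphism and $\ker(\alpha_2) \cong N \in \mathfrak{Y}$, we have $\ker(\alpha_2) \in \mathfrak{Y}$; and since $\alpha_2$ is surjective, $B/\ker(\alpha_2) \cong X \in \mathfrak{X}$. Thus $\ker(\alpha_2)$ is an ideal of $B$ belonging to $\mathfrak{Y}$ with quotient in $\mathfrak{X}$, so $B \in \mathfrak{Y}\mathfrak{X}$ by the definition of the class product. Finally, $\alpha_1 : B \longrightarrow A$ is an epimorphism with $B \in \mathfrak{Y}\mathfrak{X}$, whence $A \in \opq(\mathfrak{Y}\mathfrak{X})$, and the inclusion $\mathfrak{Y}(\opq\mathfrak{X}) \subseteq \opq(\mathfrak{Y}\mathfrak{X})$ follows.

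I do not expect a serious obstacle: essentially all the structural work is already packaged in Lemma~\ref{Direct-product}. The only points requiring care are bookkeeping which of the two kernels produced by that lemma is isomorphic to $N$ (so that the correct one is used as the $\mathfrak{Y}$-ideal of $B$, with $X$ as the $\mathfrak{X}$-quotient), and invoking isomorphism-closure of a class so that $\ker(\alpha_2)$ and $B/\ker(\alpha_2)$ are recognised as members of $\mathfrak{Y}$ and $\mathfrak{X}$ respectively.
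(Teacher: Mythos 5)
Your proof is correct and is essentially the paper's own argument: both realise $A$ as a quotient of the fibre product from Lemma~\ref{Direct-product}, identify the kernel of the projection onto the $\mathfrak{X}$-algebra with $N\in\mathfrak{Y}$ to place the fibre product in $\mathfrak{Y}\mathfrak{X}$, and then use the other projection onto $A$. The only differences are the cosmetic ordering of the two factors in the pullback and your (harmless, if unnecessary) treatment of the empty-class cases.
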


\begin{proof}
Let $A\in \mathfrak{Y}(\opq \mathfrak{X})$. Then $A$ has an ideal $I\in \mathfrak{Y}$ with $A/I\in \opq \mathfrak{X}$. Thus there exists an algebra $H\in \classX$ and an epimorphism $\mu_1 : H \longrightarrow A/I$. Let $\mu_2 : A\longrightarrow A/I$ be the canonical epimorphism.   Consider now the subalgebra 
\begin{equation}\label{PataGallina2}
S := H \curlywedge A := \{(h,l) \mid h\in H, \ l\in A, \ \mu_1(h) = \mu_2(l) = l+I\}
\end{equation}
of the direct sum $H\oplus A$. Using Lemma \ref{Direct-product}  it follows that there exist epimorphisms $\alpha_1 : S  \longrightarrow H$ and $\alpha_2 : S \longrightarrow A$ with 
\begin{align*}
\ker(\alpha_1) & = \{(0,i) \mid i\in I\} = 0\oplus I \cong I\in \classY, \ \text{and}\\
\ker(\alpha_2) & = \{(h,0) \mid h\in \ker(\mu_1)\} = \ker(\mu_1) \oplus 0 \cong \ker(\mu_1).
\end{align*}
Then $S/\ker(\alpha_1) \cong H\in \classX$ and we have that $S\in \classY\classX$. 

On the other hand $S/\ker(\alpha_2) \cong A$, and therefore $A\in \opq (\mathfrak{Y}\mathfrak{X})$.
\end{proof}

\begin{lemma}
Let $\opec$ be any of the closure operations $\ops, \opq$ or $\ops_n$. If $\classX$ and $\classY$ are $\opec$-closed classes of algebras, then $\mathfrak{XY}$ is also a $\opec$-closed class. 
\end{lemma}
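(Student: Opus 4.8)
The plan is to handle the three operations $\ops$, $\opq$, $\ops_n$ one at a time, showing in each case that $\opec(\mathfrak{XY}) \subseteq \mathfrak{XY}$; the reverse inclusion $\mathfrak{XY} \subseteq \opec(\mathfrak{XY})$ is automatic from (C2), so this suffices. In each case the strategy is to take $A \in \opec(\mathfrak{XY})$, produce a witnessing ideal $N \unlhd A$ with $N \in \mathfrak{X}$ and $A/N \in \mathfrak{Y}$, using the fact that $\mathfrak{X} = \opec\mathfrak{X}$ and $\mathfrak{Y} = \opec\mathfrak{Y}$ at the appropriate moment.

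First I would do $\opq$. Let $A \in \opq(\mathfrak{XY})$, so there is $B \in \mathfrak{XY}$ and an epimorphism $\varphi : B \to A$. Pick $N \unlhd B$ with $N \in \mathfrak{X}$ and $B/N \in \mathfrak{Y}$. Set $M := \varphi(N)$, which is an ideal of $A$ since $\varphi$ is surjective. Then $M \in \opq\mathfrak{X} = \mathfrak{X}$ (as $M$ is a quotient of $N$ via $\varphi|_N$), and $A/M$ is a quotient of $B/N$ — indeed $\varphi$ induces an epimorphism $B/N \to A/M$ — so $A/M \in \opq\mathfrak{Y} = \mathfrak{Y}$. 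Hence $A \in \mathfrak{XY}$.

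Next $\ops$. Let $A \in \ops(\mathfrak{XY})$, so $A \leq B$ for some $B \in \mathfrak{XY}$; choose $N \unlhd B$ with $N \in \mathfrak{X}$, $B/N \in \mathfrak{Y}$. Set $M := A \cap N$, which is an ideal of $A$. Then $M \leq N$ gives $M \in \ops\mathfrak{X} = \mathfrak{X}$, and $A/M = A/(A\cap N) \cong (A+N)/N \leq B/N$ (here I use the standard isomorphism theorem, which holds for arbitrary non-associative algebras), so $A/M \in \ops\mathfrak{Y} = \mathfrak{Y}$; hence $A \in \mathfrak{XY}$. The case $\ops_n$ is the one I expect to cause the most trouble, and it is the heart of the lemma. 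Suppose $A$ is a subideal of $B \in \mathfrak{XY}$, say $A = A_0 \unlhd A_1 \unlhd \cdots \unlhd A_k = B$, and fix $N \unlhd B$ with $N \in \mathfrak{X}$, $B/N \in \mathfrak{Y}$. Again put $M := A \cap N$. It is an ideal of $A$ — in fact $M$ is a subideal of $N$: intersecting the chain $A_0 \unlhd \cdots \unlhd A_k = B$ with $N$ gives $A\cap N = A_0\cap N \unlhd A_1 \cap N \unlhd \cdots \unlhd A_k \cap N = N$ (intersection with an ideal of $B$ preserves each "ideal of" relation in the chain), so $M \in \ops_n\mathfrak{X} = \mathfrak{X}$. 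For the quotient, I want $A/M$ to be a subideal of $B/N$. Working in $B/N$, consider the images $\bar A_i := (A_i + N)/N$; each $\bar A_i$ is a subalgebra of $B/N$ with $\bar A_i \unlhd \bar A_{i+1}$ (the image of an ideal containment under the projection $B \to B/N$ is again an ideal containment), so $\bar A_0 \unlhd \bar A_1 \unlhd \cdots \unlhd \bar A_k = B/N$, exhibiting $\bar A_0 = (A+N)/N$ as a subideal of $B/N \in \mathfrak{Y}$. Since $A/M = A/(A\cap N) \cong (A+N)/N = \bar A_0$, we get $A/M \in \ops_n\mathfrak{Y} = \mathfrak{Y}$, and therefore $A \in \mathfrak{XY}$.

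The main obstacle, then, is purely bookkeeping with subideal chains under the two operations of intersecting with, and projecting modulo, a fixed ideal $N \unlhd B$; the key facts needed are that if $C \unlhd D$ in $B$ then $C \cap N \unlhd D \cap N$ and $(C+N)/N \unlhd (D+N)/N$ in $B/N$, both of which are routine for non-associative algebras since ideals are just two-sided-absorbing subspaces. Everything else reduces to a single application of the second isomorphism theorem $A/(A\cap N)\cong (A+N)/N$ together with the hypothesis that $\mathfrak{X}$ and $\mathfrak{Y}$ are $\opec$-closed. I would present the $\opq$ and $\ops$ cases briefly and give the $\ops_n$ case in the detail above.
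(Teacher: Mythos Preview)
Your proof is correct and follows the same strategy as the paper: in each case one intersects the witnessing ideal with the subobject (or takes its image under the epimorphism) and then appeals to the second isomorphism theorem together with the $\opec$-closure of $\classX$ and $\classY$. Your treatment of $\ops_n$ is in fact more careful than the paper's, which simply asserts $(K+I)/I \unlhd A/I$; you correctly push the subideal chain through both intersection with $N$ and projection modulo $N$, which is exactly what is needed to place $A\cap N$ in $\ops_n\classX$ and $(A+N)/N$ in $\ops_n\classY$.
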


\begin{proof}
Operator $\ops$. Suppose $\classX = \ops \classX$ and $\classY = \ops\classY$, and let $A\in \mathfrak{XY}$.  Then $A$ has an ideal $I\in \classX$ with $A/I\in \classY$. Let $V\leq A$. We must show that $V\in \mathfrak{XY}$. To this end, consider the quotient algebra $(V+I)/I$ and notice that $(V+I)/I\leq A/I\in \classY = \ops\classY$. Hence $(V+I)/I\in \classY$. Using the second  isomorphism theorem we have that $(V+I)/I\cong V/(V\cap I)\in \classY$. Furthermore, $V\cap I\leq I \in \classX = \ops \classX$ and from here it follows that $V\in \mathfrak{XY}$.

Operator $\opq$. Suppose $\classX = \opq \classX$ and $\classY = \opq\classY$, and let $A\in \mathfrak{XY}$. Then $A$ has an ideal $I\in \classX$ with $A/I\in \classY$.  Let $K$ be an ideal of $A$. We show that $A/K\in \mathfrak{XY}$. To do that, consider the ideal $(I+K)/K$ of $A/K$. Certainly we have $(I+K)/K \cong I/(I\cap K) \in \opq \classX = \classX$. Additionally, 
\[  \frac{A/K}{(I+K)/K} \cong \frac{A}{I+K} \cong \frac{A/I}{(I+K)/I} \in  \opq(A/I) \subseteq \opq\classY =\classY.\]
Consequently, $A/K\in \mathfrak{XY}$.

Operator $\ops_n$. Suppose that  $\classX = \ops_n \classX$ and $\classY = \ops_n\classY$. Now we prove that $\ops_n(\classX\classY) \subseteq \classX\classY$. To this end, we show that if $K$ is a subideal of $A\in \classX\classY$, then $K\in \mathfrak{XY}$. Let $I$ be an ideal of $A$ with $I\in \classX$ and $A/I\in \classY$.  
Since $K\cap I$ is a subideal of $I\in \classX$ we have $K\cap I\in \ops_n\classX = \classX$; also $K/(K\cap I) \cong (K+I)/I \unlhd A/I\in \classY$. Therefore $K/(K\cap I) \in \ops_n\classY = \classY$.  Hence 
$K\in \mathfrak{XY}$. 
\end{proof}

\begin{lemma}
Let $\mathfrak{X}$, $\mathfrak{Y}$ and  $\mathfrak{Z}$ are classes of algebras. Then each of the following two conditions is sufficient to guarantee the associativity of the product of classes.
\begin{enumerate}
\item  $\mathfrak{X} = \opn_0 \mathfrak{X}$ and  $\mathfrak{Y} = \opq \mathfrak{Y}$.
\item $\mathfrak{X} = \bar{\ops}_n\mathfrak{X}$ and  $\mathfrak{Y} = \opq \mathfrak{Y}$
\end{enumerate}
\end{lemma}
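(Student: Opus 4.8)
The plan is to prove $\mathfrak X(\mathfrak Y\mathfrak Z)=(\mathfrak X\mathfrak Y)\mathfrak Z$ under each of the two hypotheses. If one of $\mathfrak X,\mathfrak Y,\mathfrak Z$ is empty then both products equal $\emptyset$ by convention, so assume all three are non-empty.

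One inclusion holds for arbitrary classes, with no hypothesis needed. If $A\in\mathfrak X(\mathfrak Y\mathfrak Z)$, choose an ideal $N$ of $A$ with $N\in\mathfrak X$ and $A/N\in\mathfrak Y\mathfrak Z$, and then an ideal of $A/N$, which by the correspondence theorem has the form $M/N$ with $N\subseteq M\unlhd A$, such that $M/N\in\mathfrak Y$ and $(A/N)/(M/N)\in\mathfrak Z$. Since $N\unlhd A$ we get $N\unlhd M$, and together with $N\in\mathfrak X$ and $M/N\in\mathfrak Y$ this shows $M\in\mathfrak X\mathfrak Y$; as $A/M\cong(A/N)/(M/N)\in\mathfrak Z$, we conclude $A\in(\mathfrak X\mathfrak Y)\mathfrak Z$. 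Hence $\mathfrak X(\mathfrak Y\mathfrak Z)\subseteq(\mathfrak X\mathfrak Y)\mathfrak Z$ always.

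For the reverse inclusion, let $A\in(\mathfrak X\mathfrak Y)\mathfrak Z$: there is an ideal $M$ of $A$ with $M\in\mathfrak X\mathfrak Y$ and $A/M\in\mathfrak Z$, and in turn an ideal $N$ of $M$ with $N\in\mathfrak X$ and $M/N\in\mathfrak Y$. The obstruction is that $N$ need not be an ideal of $A$, so I would replace it by $P:=N^A$, the ideal of $A$ generated by $N$. Because $M\unlhd A$ and $N\subseteq M$, the ideal closure satisfies $N\subseteq P\subseteq M$, and $P\unlhd A$. A short computation gives moreover $N\unlhd P$: since $P\subseteq M$ and $N\unlhd M$ we have $NP\subseteq NM\subseteq N$ and $PN\subseteq MN\subseteq N$. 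Assume for the moment that $P\in\mathfrak X$. Then $M/P$ is an epimorphic image of $M/N\in\mathfrak Y$, so $M/P\in\opq\mathfrak Y=\mathfrak Y$ (using $\mathfrak Y=\opq\mathfrak Y$, which is part of both hypotheses); thus $M/P$ is an ideal of $A/P$ lying in $\mathfrak Y$ with $(A/P)/(M/P)\cong A/M\in\mathfrak Z$, whence $A/P\in\mathfrak Y\mathfrak Z$ and $A\in\mathfrak X(\mathfrak Y\mathfrak Z)$. So the whole matter reduces to showing $P=N^A\in\mathfrak X$.

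This last step is where the two conditions diverge, and where the real work lies. Under hypothesis (2), $\mathfrak X=\bar{\ops}_n\mathfrak X$, it is immediate: $N$ is a subideal (indeed an ideal) of $P$ belonging to $\mathfrak X$, so $P\in\bar{\ops}_n\mathfrak X=\mathfrak X$. Under hypothesis (1), $\mathfrak X=\opn_0\mathfrak X$, one must instead exhibit $P$ as a sum of finitely many subideals of $P$, each in $\mathfrak X$. The natural candidates are $N$ itself and its ``translates'' under left and right multiplications by elements of $A$ --- the analogue of the fact that, for groups, $N^A$ is the product of the subnormal conjugates $N^a$ --- each of which is again an $\mathfrak X$-subideal of $P$; I expect the main difficulty to be organising these translates into genuine subideals lying in $\mathfrak X$ and showing that finitely many of them already generate $P$, a point which in the non-associative setting, as in the Lie and Leibniz cases, is handled by a careful analysis of the ideal closure \eqref{idealclosure} and, if necessary, by reducing to the finite-dimensional situation. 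Once $P\in\mathfrak X$ is established in both cases, the two inclusions proved above yield $\mathfrak X(\mathfrak Y\mathfrak Z)=(\mathfrak X\mathfrak Y)\mathfrak Z$.
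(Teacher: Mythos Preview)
Your overall strategy coincides with the paper's: pass from $N$ to its ideal closure $P=N^A$ inside $A$, note $N\subseteq P\subseteq M$, and then show $P\in\mathfrak X$, $M/P\in\opq\mathfrak Y=\mathfrak Y$, $(A/P)/(M/P)\cong A/M\in\mathfrak Z$. Your treatment of the easy inclusion and of hypothesis (2) is complete and essentially identical to the paper's (the paper phrases it as $K\unlhd\unlhd K^A$, hence $K^A\in\bar{\ops}_n\mathfrak X$).

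Where your proposal falls short is hypothesis (1): you do not prove $P\in\mathfrak X$, you only outline an intended attack (``translates'' of $N$ as $\mathfrak X$-subideals summing to $P$) and explicitly flag the difficulty as unresolved. That is a genuine gap; as written the argument for (1) is not a proof. The paper handles this step in a single line: invoking the description \eqref{idealclosure} of the ideal closure, it writes $N^A=\sum_{n\in N}(n)$ and asserts that this sum of principal ideals places $N^A$ in $\opn_0\mathfrak X=\mathfrak X$. So the paper does not follow the ``translates of $N$'' route you sketch, nor does it pass to finite dimension; it appeals directly to the explicit formula for $N^A$. You should be aware, however, that the paper's line is itself terse at exactly the point you worried about: it does not spell out why each summand $(n)$ lies in $\mathfrak X$, nor why finitely many suffice for the $\opn_0$ condition. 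Your diagnosis that this is where the substance lies is accurate; what is missing from your proposal is any argument at all for it.
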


\begin{proof}
It is obvious that $\mathfrak{X} (\mathfrak{Y}\mathfrak{Z}) \subseteq (\mathfrak{X}\mathfrak{Y}) \mathfrak{Z}$. Then  it is enough to prove that $(\mathfrak{X} \mathfrak{Y})\mathfrak{Z} \subseteq \mathfrak{X} (\mathfrak{Y} \mathfrak{Z})$. 
\begin{enumerate}
\item Let $A$ be an algebra belonging to the class $(\mathfrak{X} \mathfrak{Y})\mathfrak{Z}$. Then $A$ has a chain of subalgebras
\begin{equation}\label{cadena1}
0\leq K \unlhd I \unlhd A,
\end{equation}
such that $K\in \mathfrak{X}$, $I/K\in \mathfrak{Y}$ and $A/I\in \mathfrak{Z}$. Let $K^A$ be the ideal closure of $K$ in $A$. Then, by using \eqref{idealclosure}, we have $K^A = \sum_{k\in K} (k)$ and it follows that $K^A$ is an ideal of $A$, which belongs to $\opn_0 \mathfrak{X} = \mathfrak{X}$. Furthermore, the quotient algebra $A/K^A$  has an ideal $I/K^A \cong (I/K)/(K^A/K) \in \opq (I/K) \subseteq \opq\mathfrak{Y} = \mathfrak{Y}$, and $(A/A^K)/(I/K^A) \cong A/I\in \mathfrak{Z}$. Thus $A/K^A\in \mathfrak{YZ}$, and we have proved that $A\in \mathfrak{X} (\mathfrak{Y} \mathfrak{Z})$, as desired.

\item Let us suppose again that $A$ has a chain \eqref{cadena1} of subalgebras with the stated properties, and let $K^A$ be the ideal closure of $K$ in $A$. Since $K\in \mathfrak{X}$ and $K \unlhd\unlhd K^A$ we have that $K^A\in \bar{\ops}_n\mathfrak{X} = \mathfrak{X}$. The rest is similar to the argument in (1).
\end{enumerate}
\end{proof}

\section{A partial order on closure operations}

\begin{definition}
Let $\opec_1$ and $\opec_2$ be class maps, in particular, closure operations. We say that $\opec_1$ is contained in $\opec_2$, and write $\opec_1 \leq \opec_2$ if $\opec_1\mathfrak{X} \subseteq \opec_2 \mathfrak{X}$, 	for every algebras class $\mathfrak{X}$.
\end{definition}

\begin{remarks}
\begin{enumerate}
\item It is straightforward to verify that $\leq$ \ is a relation of partial order on the set of class maps and hence on the set of closure operations.
\item If $\opec_1$ and $\opec_2$  are closure operations, $\opec_1 \leq \opec_2$ if and only if $\opec_2$-closure invariably implies $\opec_1$-closure.
\item From the definition, it is obvious that $\opd_0 \leq \opr_0$, $\ops_n \leq \ops$ and $\opd_0 \leq \opn_0$.
\end{enumerate}
\end{remarks}

\begin{lemma}\label{lemma1.12}
Let $\opec_1$ and $\opec_2$ be closure operations. Then $\opec_1 \leq \opec_2$ if and only if every $\opec_2$-closed class is $\opec_1$-closed. 
\end{lemma}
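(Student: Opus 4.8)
The plan is to prove both implications directly from the definitions, using the characterisation of $\opec\mathfrak{X}$ as the smallest $\opec$-closed class containing $\mathfrak{X}$ (noted in the remark following the definition of product of operations).

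\textbf{Forward direction.} Assume $\opec_1 \leq \opec_2$, and let $\mathfrak{Y}$ be a $\opec_2$-closed class, so $\opec_2\mathfrak{Y} = \mathfrak{Y}$. I want to show $\opec_1\mathfrak{Y} = \mathfrak{Y}$. By expansiveness (C2), $\mathfrak{Y}\subseteq \opec_1\mathfrak{Y}$, so it suffices to prove $\opec_1\mathfrak{Y}\subseteq \mathfrak{Y}$. But $\opec_1\mathfrak{Y}\subseteq \opec_2\mathfrak{Y}$ by hypothesis, and $\opec_2\mathfrak{Y} = \mathfrak{Y}$, giving $\opec_1\mathfrak{Y}\subseteq \mathfrak{Y}$ at once. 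Hence $\mathfrak{Y}$ is $\opec_1$-closed.

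\textbf{Converse direction.} Assume every $\opec_2$-closed class is $\opec_1$-closed, and fix an arbitrary class $\mathfrak{X}$. I must show $\opec_1\mathfrak{X}\subseteq \opec_2\mathfrak{X}$. The key observation is that $\opec_2\mathfrak{X}$ is itself a $\opec_2$-closed class by idempotence (C3), namely $\opec_2(\opec_2\mathfrak{X}) = \opec_2\mathfrak{X}$. By the hypothesis it is therefore $\opec_1$-closed, i.e. $\opec_1(\opec_2\mathfrak{X}) = \opec_2\mathfrak{X}$. Since $\mathfrak{X}\subseteq \opec_2\mathfrak{X}$ by (C2), monotonicity of $\opec_1$ (again (C2)) yields $\opec_1\mathfrak{X}\subseteq \opec_1(\opec_2\mathfrak{X}) = \opec_2\mathfrak{X}$. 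As $\mathfrak{X}$ was arbitrary, $\opec_1\leq\opec_2$.

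There is no serious obstacle here; the only point requiring a little care is to invoke the right axiom at each step — expansiveness and monotonicity (C2) in both directions, and idempotence (C3) in the converse to guarantee that $\opec_2\mathfrak{X}$ is genuinely $\opec_2$-closed so that the hypothesis applies to it. The argument uses all three closure axioms, which is consistent with the statement being false for mere operations that are not idempotent.
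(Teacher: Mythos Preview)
Your proof is correct and follows essentially the same approach as the paper's own proof: the forward direction uses $\opec_1\mathfrak{Y}\subseteq\opec_2\mathfrak{Y}=\mathfrak{Y}$ together with expansiveness, and the converse applies the hypothesis to the $\opec_2$-closed class $\opec_2\mathfrak{X}$ and then uses monotonicity. The only difference is cosmetic (variable names and a slightly more explicit invocation of (C3)).
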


\begin{proof}
Let us suppose that $\opec_1 \leq \opec_2$, and $\mathfrak{X} = \opec_2 \mathfrak{X}$. Then $\opec_1 \mathfrak{X} \subseteq \opec_2\mathfrak{X} = \mathfrak{X}$. Owing to the fact that $\opec_1$ is expanding, we have $\mathfrak{X}  \subseteq \opec_1\mathfrak{X}$, and therefore $\mathfrak{X} = \opec_1\mathfrak{X}$.

Reciprocally, assume that  every $\opec_2$-closed class is also $\opec_1$-closed, and let $\mathfrak{X}$ be any class of algebras. Since $\opec_2$ is a closure operation, we have 
$\mathfrak{X}  \subseteq \opec_2\mathfrak{X}$ and $\opec_2 \mathfrak{X}$ is $\opec_2$-closed. Seeing that $\opec_1$ is monotonic, it follows by assumption that $\opec_1\mathfrak{X} \subseteq \opec_1(\opec_2\mathfrak{X}) = \opec_2\mathfrak{X}$. Thus 
 $\opec_1 \leq \opec_2$.	
\end{proof}

\begin{definition}[The join of a set of closure operations]
Let $\{\opec_j\mid j\in J\}$ be a set of closure operations. We define the join $\opec := \langle \opec_j\mid j\in J\rangle$ by 
\begin{equation}\label{join}
\opec\mathfrak{X} := \bigcap \{\mathfrak{Y}\mid \mathfrak{X}\subseteq \mathfrak{Y} = \opec_j\mathfrak{Y} \ \ \text{for all} \ j\in J \}.
\end{equation}
for any algebras class $\mathfrak{X}$.
\end{definition}	

\begin{lemma}\label{lemma1.14}
Let $\{\opec_j\mid j\in J\}$ be a set of closure operations and let $\opec = \langle \opec_j\mid j\in J\rangle$ be their join. Then
\begin{enumerate}
\item the class map $\opec$ is a closure operation;
\item if $\mathfrak{X}$ is a class of algebras, then $\opec\mathfrak{X}$ is the uniquely determined, smallest  class containing $\mathfrak{X}$ which is simultaneously $\opec_j$-closed for all $j\in J$;
\item in the partial order on the set of closure operations defined above the join  $\opec$ is the least upper bound of the set $\{\opec_j\mid j\in J\}$.
\end{enumerate}
\end{lemma}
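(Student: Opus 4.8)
The plan is to prove the three parts in the order stated, extracting along the way the facts that feed into the later parts. First I would record the key structural observation on which everything rests: for any class $\mathfrak{X}$ the collection of classes $\mathfrak{Y}$ occurring in the intersection \eqref{join} is non-empty, since $\mathfrak{E}$ (the class of all finite-dimensional algebras) satisfies $\mathfrak{X}\subseteq\mathfrak{E}=\opec_j\mathfrak{E}$ for every $j$ by (C1) and (C2), as noted in the text. Moreover, by the earlier Remark, an intersection of $\opec_j$-closed classes is again $\opec_j$-closed; hence $\opec\mathfrak{X}$ is itself $\opec_j$-closed for every $j\in J$, and it obviously contains $\mathfrak{X}$. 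This already gives most of part (2): $\opec\mathfrak{X}$ is a class containing $\mathfrak{X}$ that is simultaneously $\opec_j$-closed for all $j$, and if $\mathfrak{Z}$ is any other such class then $\mathfrak{Z}$ is one of the classes being intersected in \eqref{join}, so $\opec\mathfrak{X}\subseteq\mathfrak{Z}$; minimality and hence uniqueness follow.

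Next I would prove part (1), that $\opec$ is a closure operation, by checking (C1), (C2), (C3). For (C1): $\opec\emptyset=\emptyset$ because $\emptyset$ is $\opec_j$-closed for all $j$ (by (C1) for each $\opec_j$), so $\emptyset$ itself is among the intersected classes. For (C2), monotonicity: if $\mathfrak{X}\subseteq\mathfrak{W}$, then every $\opec_j$-closed class containing $\mathfrak{W}$ also contains $\mathfrak{X}$, so the intersection defining $\opec\mathfrak{X}$ is over a larger family and thus $\opec\mathfrak{X}\subseteq\opec\mathfrak{W}$; the expanding property $\mathfrak{X}\subseteq\opec\mathfrak{X}$ is immediate since every class in the intersection contains $\mathfrak{X}$. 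For (C3), idempotence: by part (2), $\opec\mathfrak{X}$ is $\opec_j$-closed for all $j$, so when we compute $\opec(\opec\mathfrak{X})$ the class $\opec\mathfrak{X}$ is itself one of the classes intersected in \eqref{join} (it contains $\opec\mathfrak{X}$ and is $\opec_j$-closed for all $j$), whence $\opec(\opec\mathfrak{X})\subseteq\opec\mathfrak{X}$; the reverse inclusion is the expanding property already shown. Thus $\opec(\opec\mathfrak{X})=\opec\mathfrak{X}$.

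Finally I would prove part (3). First, $\opec$ is an upper bound: fix $k\in J$ and an arbitrary class $\mathfrak{X}$. Since $\opec\mathfrak{X}$ is $\opec_k$-closed by part (2), Lemma~\ref{lemma1.12}-type reasoning is not even needed — but to get $\opec_k\mathfrak{X}\subseteq\opec\mathfrak{X}$ directly, note $\mathfrak{X}\subseteq\opec\mathfrak{X}$ and $\opec_k$ is monotonic, so $\opec_k\mathfrak{X}\subseteq\opec_k(\opec\mathfrak{X})=\opec\mathfrak{X}$, the last equality because $\opec\mathfrak{X}$ is $\opec_k$-closed. Hence $\opec_k\leq\opec$ for all $k\in J$. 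For the least-upper-bound property, suppose $\opd$ is a closure operation with $\opec_j\leq\opd$ for all $j\in J$; I must show $\opec\leq\opd$, i.e. $\opec\mathfrak{X}\subseteq\opd\mathfrak{X}$ for every $\mathfrak{X}$. By Lemma~\ref{lemma1.12}, each $\opd$-closed class is $\opec_j$-closed for every $j$; in particular $\opd\mathfrak{X}$ is $\opec_j$-closed for all $j$, and it contains $\mathfrak{X}$, so $\opd\mathfrak{X}$ is one of the classes intersected in \eqref{join}, giving $\opec\mathfrak{X}\subseteq\opd\mathfrak{X}$. This completes all three parts.

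I do not anticipate a serious obstacle here; the argument is a formal manipulation of the defining intersection together with the two cited facts (intersections of $\opec_j$-closed classes are $\opec_j$-closed, and Lemma~\ref{lemma1.12}). The only point requiring a moment's care is the non-emptiness of the family in \eqref{join}, which is where the universal class $\mathfrak{E}$ enters and without which the intersection would be vacuous and the whole construction ill-defined; I would make sure to state that explicitly at the outset. A second small subtlety is the logical order: part (2) is most cleanly established first (or in parallel with the "$\opec$ is $\opec_j$-closed" step), since both idempotence in part (1) and the least-upper-bound half of part (3) invoke the fact that $\opec\mathfrak{X}$ is itself $\opec_j$-closed for all $j$.
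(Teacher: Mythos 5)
Your proposal is correct and follows essentially the same route as the paper: establish that $\opec\mathfrak{X}$ is itself $\opec_j$-closed for all $j$ via the remark on intersections of closed classes, deduce minimality, and use this both for idempotence and for the least-upper-bound property (via Lemma \ref{lemma1.12}). Your explicit checks of (C1) and of the non-emptiness of the intersected family (via $\mathfrak{E}$) are points the paper leaves implicit, but they do not change the argument.
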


\begin{proof}
\begin{enumerate}
\item From the definition of $\opec$ follows immediately that it is expanding and monotonic. To prove the last property note that, for any class $\mathfrak{X}$, we have  
\[\{\mathfrak{Y} \mid \mathfrak{X}\subseteq \mathfrak{Y} = \opec_j\mathfrak{Y} \ \ \text{for all} \ j\in J \} = \{\mathfrak{Y} \mid \opec\mathfrak{X}\subseteq \mathfrak{Y} = \opec_j\mathfrak{Y} \ \ \text{for all} \ j\in J \},\]
and therefore $\opec(\opec\mathfrak{X}) = \opec\mathfrak{X}$. Then $\opec$ is also idempotent.

\item It follows easily from the definition that if $\opec$ is a closure operation and $\{\mathfrak{X}_j\mid j\in J\}$  is a set of $\opec$-closed classes then $\bigcap_{j\in J} \mathfrak{X}_j$ is $\opec$-closed. Thus $\opec\mathfrak{X}$ is a $\opec_j$-closed class for all $j\in J$. On the other hand, any $\opec_j$-closed class $\mathfrak{Y}$ for all $j\in J$, that  contain $\mathfrak{X}$ certainly contains $\opec\mathfrak{X}$ by definition, and therefore Statement (2) is true.
		
\item Let $\mathfrak{X}$ be a class of algebras. Since $\mathfrak{X} \subseteq \opec\mathfrak{X}$ we have for all $j\in J$
\begin{equation}
\opec_j \mathfrak{X} \subseteq \opec_j(\opec\mathfrak{X}) = \opec\mathfrak{X};
\end{equation}
that is, $\opec_j\leq \opec$ for all $j\in J$, and therefore $\opec$ is an upper bound of the set $\{\opec_j\mid j\in J\}$. Let now $\opa$ be any upper bound for the set $\{\opec_j\mid j\in J\}$ and let $\mathfrak{X}$ be any $\opa$-closed class. By Lemma \ref{lemma1.12} we have that  $\mathfrak{X}$ is a $\opec_j$-closed class for all $j\in J$. Then by (2) follows that
\begin{equation}
\opec \mathfrak{X} \subseteq \mathfrak{X} = \opa \mathfrak{X}.
\end{equation}
This shows Statement (3).
\end{enumerate} 
\end{proof}

\begin{lemma}
Let $\opec_1, \ldots, \opec_n$ be closure operations, and let   $\mathfrak{X}$ be a class of algebras. Then  $\mathfrak{X} = \opec_1\cdots \opec_n \mathfrak{X}$ if and only if $\mathfrak{X} = \langle \opec_1\cdots \opec_n\rangle\mathfrak{X}$.
\end{lemma}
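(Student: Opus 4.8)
The plan is to reduce the statement to the assertion that a class $\mathfrak{X}$ satisfies $\mathfrak{X} = \opec_1\cdots\opec_n\mathfrak{X}$ if and only if $\mathfrak{X}$ is $\opec_j$-closed for every $j\in\{1,\ldots,n\}$, and then to invoke Lemma \ref{lemma1.14}(2). Indeed, by that lemma $\langle\opec_1,\ldots,\opec_n\rangle\mathfrak{X}$ is the smallest class containing $\mathfrak{X}$ that is simultaneously $\opec_j$-closed for all $j$; hence $\mathfrak{X} = \langle\opec_1,\ldots,\opec_n\rangle\mathfrak{X}$ holds precisely when $\mathfrak{X}$ itself is $\opec_j$-closed for every $j$ (for one direction use that the join is expanding). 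So the whole proof comes down to the chain of equivalences: $\mathfrak{X}=\opec_1\cdots\opec_n\mathfrak{X}\iff\mathfrak{X}=\opec_j\mathfrak{X}\text{ for all }j\iff\mathfrak{X}=\langle\opec_1,\ldots,\opec_n\rangle\mathfrak{X}$.

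For the implication $(\Leftarrow)$ of the middle equivalence, if $\mathfrak{X}=\opec_j\mathfrak{X}$ for all $j$, then a telescoping computation gives $\opec_1\cdots\opec_n\mathfrak{X}=\opec_1\cdots\opec_{n-1}(\opec_n\mathfrak{X})=\opec_1\cdots\opec_{n-1}\mathfrak{X}=\cdots=\mathfrak{X}$, using idempotence of each $\opec_j$ applied to $\mathfrak{X}$ step by step. This direction is routine.

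For the implication $(\Rightarrow)$, assume $\mathfrak{X}=\opec_1\cdots\opec_n\mathfrak{X}$. Because each $\opec_i$ is expanding and monotonic, I would first establish the ascending chain
\[\mathfrak{X}\subseteq\opec_n\mathfrak{X}\subseteq\opec_{n-1}\opec_n\mathfrak{X}\subseteq\cdots\subseteq\opec_1\cdots\opec_n\mathfrak{X}=\mathfrak{X},\]
so that every inclusion in it is an equality; in particular $\opec_k(\opec_{k+1}\cdots\opec_n\mathfrak{X})=\opec_{k+1}\cdots\opec_n\mathfrak{X}$ for each $k$. I would then run a downward induction on $k$ from $n$ to $1$ to prove $\opec_k\mathfrak{X}=\mathfrak{X}$: the base case $\opec_n\mathfrak{X}=\mathfrak{X}$ is the first equality of the chain, and if $\opec_{k}\mathfrak{X}=\cdots=\opec_n\mathfrak{X}=\mathfrak{X}$, then applying these in turn gives $\opec_k\opec_{k+1}\cdots\opec_n\mathfrak{X}=\mathfrak{X}$, hence $\opec_{k-1}\mathfrak{X}=\opec_{k-1}\opec_k\cdots\opec_n\mathfrak{X}=\mathfrak{X}$ by the relevant equality from the chain. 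This yields $\opec_j\mathfrak{X}=\mathfrak{X}$ for all $j$.

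The only real subtlety — and the step I expect to require care — is exactly this downward induction: the chain of forced equalities directly gives $\opec_k$-closure of the \emph{tail products} $\opec_{k+1}\cdots\opec_n\mathfrak{X}$ rather than of $\mathfrak{X}$ itself, so one must feed in the inductive hypothesis that those tails already collapse to $\mathfrak{X}$ before concluding $\opec_{k-1}$-closure of $\mathfrak{X}$. Everything else — the telescoping in $(\Leftarrow)$, and the appeal to Lemma \ref{lemma1.14}(2) to pass between "$\opec_j$-closed for all $j$" and "$\langle\opec_1,\ldots,\opec_n\rangle$-closed" — is immediate.
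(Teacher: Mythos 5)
Your proposal is correct and follows essentially the same route as the paper: both reduce the statement via Lemma \ref{lemma1.14}(2) to showing that $\mathfrak{X}=\opec_1\cdots\opec_n\mathfrak{X}$ forces $\mathfrak{X}=\opec_j\mathfrak{X}$ for each $j$, using only that each $\opec_i$ is expanding and monotonic (the paper sandwiches $\opec_j\mathfrak{X}\subseteq\opec_1\cdots\opec_n\mathfrak{X}=\mathfrak{X}$ directly, which is the same squeeze your collapsed chain encodes). The downward induction you flag as the delicate step is actually unnecessary: once every term of your ascending chain equals $\mathfrak{X}$, the tail product $\opec_{k+1}\cdots\opec_n\mathfrak{X}$ \emph{is} the class $\mathfrak{X}$, so $\opec_k\mathfrak{X}=\opec_k(\opec_{k+1}\cdots\opec_n\mathfrak{X})=\mathfrak{X}$ immediately.
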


\begin{proof}
Let suppose that $\mathfrak{X} = \opec_1\cdots \opec_n \mathfrak{X}$ and let $j\in \{1,\ldots, n\}$. By Lemma \ref{lemma1.14} (2) it is sufficient to prove that $\mathfrak{X} = \opec_j \mathfrak{X}$. Since $\opec_{j+1},\ldots, \opec_n$ are expanding and monotonic, we have $\mathfrak{X} \subseteq \opec_{j+1}\cdots \opec_n \mathfrak{X}$, and hence $\opec_j\mathfrak{X} \subseteq \opec_j \opec_{j+1}\cdots \opec_n \mathfrak{X}$. On the other hand, $\opec_1, \ldots, \opec_{j-1}$ are also expanding and monotonic, and then 
\begin{equation}
\opec_j\mathfrak{X} \subseteq \opec_1 \cdots \opec_{j-1} (\opec_j \mathfrak{X}) \subseteq \opec_1 \cdots \opec_{j-1} (\opec_j \opec_{j+1}\cdots \opec_n \mathfrak{X}).
\end{equation}
Hence $\opec_j\mathfrak{X} \subseteq \opec_1 \cdots \opec_n \mathfrak{X} = \mathfrak{X}$, and consequently $\mathfrak{X} = \opec_j \mathfrak{X}$ as required.

Conversely, assume that $\mathfrak{X} = \langle \opec_1\cdots \opec_n\rangle\mathfrak{X}$. Then it holds that  $\opec_j \mathfrak{X} = \mathfrak{X}$ for all $j\in \{1,\ldots, n\}$, and $\mathfrak{X} = \opec_1\cdots \opec_n \mathfrak{X}$.
\end{proof}

\begin{lemma}\label{lemma1.16}
Let $\opec_1$ and $\opec_2$ be closure operations.  Then any two of the following statements are equivalent:
\begin{enumerate}
\item The class map $\opec_1\opec_2$ is a closure operation;
\item $\opec_2\opec_1 \leq \opec_1\opec_2$
\item $\opec_1\opec_2 = \langle \opec_1,\opec_2\rangle$.
\end{enumerate}
\end{lemma}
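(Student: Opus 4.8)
The plan is to prove the cycle $(1)\Rightarrow(2)\Rightarrow(3)\Rightarrow(1)$. Two preliminary remarks will be used throughout. First, a composite of expanding monotonic class maps is again expanding and monotonic, so $\opec_1\opec_2$ is always expanding and monotonic; hence $\opec_1\opec_2$ is a closure operation exactly when it is idempotent, and, since it is expanding, $\opec_1\opec_2\mathfrak{X}\subseteq(\opec_1\opec_2)^2\mathfrak{X}$ always holds, so idempotence reduces to the reverse inclusion $(\opec_1\opec_2)^2\leq\opec_1\opec_2$. Second, because $\opec_1,\opec_2$ are expanding and monotonic, one always has $\opec_1\leq\opec_1\opec_2$ (apply $\opec_1$ to $\mathfrak{X}\subseteq\opec_2\mathfrak{X}$) and $\opec_2\leq\opec_1\opec_2$ (as $\opec_1$ is expanding); in particular $\opec_1\opec_2$ is an upper bound of $\{\opec_1,\opec_2\}$ as soon as it is known to be a closure operation. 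I shall also use freely that the relation $\leq$ is preserved on applying a monotonic operation on the left and on substituting a fixed class on the right.

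For $(1)\Rightarrow(2)$, fix a class $\mathfrak{X}$ and put $\mathfrak{Y}:=\opec_1\opec_2\mathfrak{X}$; by hypothesis $\mathfrak{Y}$ is $\opec_1\opec_2$-closed. The decisive observation is that $\mathfrak{Y}$ is then automatically $\opec_1$-closed and $\opec_2$-closed: from $\mathfrak{Y}\subseteq\opec_2\mathfrak{Y}$ and monotonicity of $\opec_1$ one gets $\opec_1\mathfrak{Y}\subseteq\opec_1\opec_2\mathfrak{Y}=\mathfrak{Y}$, while $\opec_1$ being expanding gives $\opec_2\mathfrak{Y}\subseteq\opec_1\opec_2\mathfrak{Y}=\mathfrak{Y}$, and together with expansiveness this forces $\opec_1\mathfrak{Y}=\mathfrak{Y}=\opec_2\mathfrak{Y}$. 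Since $\mathfrak{X}\subseteq\mathfrak{Y}$ and $\mathfrak{Y}$ is $\opec_1$-closed, monotonicity of $\opec_1$ gives $\opec_1\mathfrak{X}\subseteq\mathfrak{Y}$, and then $\opec_2\opec_1\mathfrak{X}\subseteq\opec_2\mathfrak{Y}=\mathfrak{Y}=\opec_1\opec_2\mathfrak{X}$. As $\mathfrak{X}$ was arbitrary, $\opec_2\opec_1\leq\opec_1\opec_2$.

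For $(2)\Rightarrow(3)$, I would first deduce idempotence of $\opec_1\opec_2$ from $\opec_2\opec_1\leq\opec_1\opec_2$ by order-algebra alone: applying $\opec_1$ on the left and using idempotence of $\opec_1$ gives $\opec_1\opec_2\opec_1\leq\opec_1\opec_1\opec_2=\opec_1\opec_2$, and then evaluating at the class $\opec_2\mathfrak{X}$ and using idempotence of $\opec_2$ gives $(\opec_1\opec_2)^2\mathfrak{X}\subseteq\opec_1\opec_2\opec_2\mathfrak{X}=\opec_1\opec_2\mathfrak{X}$; by the first preliminary remark $\opec_1\opec_2$ is a closure operation. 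It then dominates $\opec_1$ and $\opec_2$, so by Lemma~\ref{lemma1.14}(3) the join $\opec:=\langle\opec_1,\opec_2\rangle$ satisfies $\opec\leq\opec_1\opec_2$. For the reverse inequality, note that for any class $\mathfrak{X}$ the class $\opec\mathfrak{X}$ is both $\opec_2$-closed and $\opec_1$-closed (Lemma~\ref{lemma1.14}(2)), hence $\opec_2\mathfrak{X}\subseteq\opec_2(\opec\mathfrak{X})=\opec\mathfrak{X}$ and therefore $\opec_1\opec_2\mathfrak{X}\subseteq\opec_1(\opec\mathfrak{X})=\opec\mathfrak{X}$; thus $\opec_1\opec_2\leq\opec$ and so $\opec_1\opec_2=\langle\opec_1,\opec_2\rangle$. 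Finally $(3)\Rightarrow(1)$ is immediate, since $\langle\opec_1,\opec_2\rangle$ is a closure operation by Lemma~\ref{lemma1.14}(1).

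I expect the implication $(1)\Rightarrow(2)$ to be the main obstacle: the right move is to test $\opec_2\opec_1$ against the particular $\opec_1\opec_2$-closed class $\opec_1\opec_2\mathfrak{X}$ rather than an arbitrary class, the point being that any $\opec_1\opec_2$-closed class is automatically $\opec_1$-closed and $\opec_2$-closed. Once this is established the remaining implications are purely formal, relying only on the compatibility of $\leq$ with composition and on Lemma~\ref{lemma1.14}.
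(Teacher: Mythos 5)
Your proof is correct and follows essentially the same route as the paper: the cycle $(1)\Rightarrow(2)\Rightarrow(3)\Rightarrow(1)$, with the same underlying computations (the inclusion $\opec_2\opec_1\mathfrak{X}\subseteq\opec_1\opec_2\mathfrak{X}$ extracted from idempotence of the composite, and the intersection description of the join). The only cosmetic difference is in $(2)\Rightarrow(3)$, where you first establish idempotence of $\opec_1\opec_2$ and then invoke the least-upper-bound property of Lemma~\ref{lemma1.14}(3), whereas the paper argues directly that $\opec_1\opec_2\mathfrak{X}$ is one of the classes over which the join's intersection is taken; both reduce to the same verification that $\opec_1\opec_2\mathfrak{X}$ is $\opec_1$- and $\opec_2$-closed.
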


\begin{proof}
$(1)\Rightarrow (2)$. Let $\mathfrak{X}$ be any class of algebras, and suppose that $\opec_1\opec_2$ is a closure operation. From the expanding and monotonic properties of $\opec_1$ and $\opec_2$ we have 
\begin{equation}
\opec_2 \opec_1 \mathfrak{X} \subseteq \opec_2 \opec_1  (\opec_2 \mathfrak{X}) \subseteq \opec_1 (\opec_2 \opec_1 (\opec_2 \mathfrak{X})) = (\opec_1 \opec_2)^2 \mathfrak{X} = (\opec_1 \opec_2) \mathfrak{X}.
\end{equation}
Then we have  $\opec_2\opec_1 \leq \opec_1\opec_2$.

$(2)\Rightarrow (3)$. Let $\mathfrak{X}$ be any class of algebras, and let $\mathfrak{X}\subseteq \mathfrak{Y} = \opec_1 \mathfrak{Y} = \opec_2 \mathfrak{Y}$. Then $(\opec_1 \opec_2) \mathfrak{X} \subseteq (\opec_1 \opec_2) \mathfrak{Y} = \opec_1 (\opec_2\mathfrak{Y}) = \opec_1\mathfrak{Y} = \mathfrak{Y}$. This show that $(\opec_1 \opec_2) \mathfrak{X} \subseteq \langle\opec_1, \opec_2\rangle \mathfrak{X}$. On the other hand, it is clear that $(\opec_1 \opec_2) \mathfrak{X}$ is a $\opec_1$-closed class containing $\mathfrak{X}$, and since
\[\opec_2(\opec_1 \opec_2) \mathfrak{X} = (\opec_2\opec_1) \opec_2 \mathfrak{X} = (\opec_1\opec_2) \opec_2 \mathfrak{X} = (\opec_1 \opec_2) \mathfrak{X},\]
the class $(\opec_1 \opec_2) \mathfrak{X}$ is a $\opec_2$-closed class. Therefore $\langle\opec_1, \opec_2\rangle \mathfrak{X} \subseteq (\opec_1 \opec_2) \mathfrak{X}$, and (3) holds. 

$(3)\Rightarrow (1)$. This follows from the fact that $\langle \opec_1,\opec_2\rangle$ is a closure operation.
\end{proof}

\begin{lemma}\label{lemma1.17}
The class maps products $\ops\opd_0$, $\ope_\Phi\opd_0$ and $\ope_\Phi \opq$ are closure operations.
\end{lemma}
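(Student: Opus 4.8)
The plan is to invoke Lemma~\ref{lemma1.16}: each of $\ops$, $\opd_0$, $\ope_\Phi$ and $\opq$ is already a closure operation, so by the equivalence $(1)\Leftrightarrow(2)$ there it suffices, for each product $\opec_1\opec_2$ appearing in the statement, to establish the single inclusion $\opec_2\opec_1\mathfrak{X}\subseteq\opec_1\opec_2\mathfrak{X}$ for every class $\mathfrak{X}$. Thus the task reduces to proving $\opd_0\ops\leq\ops\opd_0$, $\opq\ope_\Phi\leq\ope_\Phi\opq$ and $\opd_0\ope_\Phi\leq\ope_\Phi\opd_0$.

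The first of these is immediate: if $A=\bigoplus_{j=1}^{r}I_j$ with each $I_j\leq K_j$ and $K_j\in\mathfrak{X}$, then $A$ is a subalgebra of $\bigoplus_{j=1}^{r}K_j\in\opd_0\mathfrak{X}$, so $A\in\ops\opd_0\mathfrak{X}$. For $\opq\ope_\Phi\leq\ope_\Phi\opq$ the key preliminary observation I would isolate is that an epimorphism $\theta\colon C\to D$ satisfies $\theta(\Phi(C))\leq\Phi(D)$: taking preimages under $\theta$ biject the maximal subalgebras of $D$ with the maximal subalgebras of $C$ that contain $\ker\theta$, so $F(C)\leq\theta^{-1}(F(D))$ and hence $\theta(F(C))\leq F(D)$; since $\theta(\Phi(C))$ is an ideal of $D$ lying inside $F(D)$, it lies inside $\core_D(F(D))=\Phi(D)$. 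Granting this, let $A\in\opq\ope_\Phi\mathfrak{X}$, say $\varphi\colon H\to A$ is an epimorphism with $H\in\ope_\Phi\mathfrak{X}$, and pick $N\unlhd H$ with $N\leq\Phi(H)$ and $H/N\in\mathfrak{X}$. Then $M:=\varphi(N)\unlhd A$ satisfies $M\leq\varphi(\Phi(H))\leq\Phi(A)$, while $A/M\cong H/(N+\ker\varphi)$ is an epimorphic image of $H/N$, so $A/M\in\opq\mathfrak{X}$ and $A\in\ope_\Phi\opq\mathfrak{X}$.

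The main obstacle is the direct-sum inclusion $\opd_0\ope_\Phi\leq\ope_\Phi\opd_0$, which I would deduce from the auxiliary fact that $\bigoplus_{j}\Phi(I_j)\leq\Phi\Bigl(\bigoplus_{j}I_j\Bigr)$ for every direct decomposition $A=\bigoplus_{j}I_j$. Indeed, given $A=\bigoplus_{j=1}^{r}I_j$ with $N_j\unlhd I_j$, $N_j\leq\Phi(I_j)$ and $I_j/N_j\in\mathfrak{X}$, the subspace $N:=\bigoplus_{j=1}^{r}N_j$ is an ideal of $A$ contained in $\Phi(A)$ with $A/N\cong\bigoplus_{j}I_j/N_j\in\opd_0\mathfrak{X}$, giving $A\in\ope_\Phi\opd_0\mathfrak{X}$. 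To prove the auxiliary fact it is enough, $\Phi(A)$ being an ideal, to show $\Phi(I)\leq\Phi(A)$ for one direct summand $I$ of $A=I\oplus J$. Here $\Phi(I)\unlhd A$, because $J$ annihilates $I$ so every ideal of $I$ is an ideal of $A$. Now let $M$ be a maximal subalgebra of $A$; if $\Phi(I)\not\leq M$ then $M+\Phi(I)$ is a subalgebra properly containing $M$, so $M+\Phi(I)=A$ by maximality, whence $M+I=A$, and the modular law yields $I=(I\cap M)+\Phi(I)$. Since $\Phi(I)\leq F(I)$ and the elements of $F(I)$ are non-generators of $I$, this forces $I\cap M=I$, i.e. $I\leq M$, contradicting the choice of $M$. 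Therefore $\Phi(I)$ lies in every maximal subalgebra of $A$, so $\Phi(I)\leq F(A)$, and being an ideal of $A$ it lies in $\core_A(F(A))=\Phi(A)$. The points that will need care are the degenerate cases (e.g. summands or quotients having no maximal subalgebras) and, throughout, keeping the Frattini subalgebra $F$ distinct from the Frattini ideal $\Phi$.
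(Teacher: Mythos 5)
Your proposal is correct and follows the same overall strategy as the paper: reduce each case via Lemma~\ref{lemma1.16} to the single inclusion $\opec_2\opec_1\leq\opec_1\opec_2$, and your treatments of $\opd_0\ops\leq\ops\opd_0$ and $\opq\ope_\Phi\leq\ope_\Phi\opq$ match the paper's almost verbatim (the paper simply asserts $(\Phi(H)+N)/N\leq\Phi(H/N)$ where you prove that epimorphisms carry $\Phi$ into $\Phi$; your extra detail is sound). The one genuine divergence is in $\opd_0\ope_\Phi\leq\ope_\Phi\opd_0$: the paper invokes Towers' theorem that $\Phi(\bigoplus_j A_j)=\bigoplus_j\Phi(A_j)$, whereas you prove only the inclusion you actually need, $\bigoplus_j\Phi(I_j)\leq\Phi(\bigoplus_j I_j)$, by a self-contained non-generator argument (each $\Phi(I_j)$ is an ideal of the direct sum because the complementary summand annihilates $I_j$, and the modular law plus the non-generator property of $F(I_j)$ shows it lies in every maximal subalgebra). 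This buys independence from the citation at the cost of implicitly using that every proper subalgebra is contained in a maximal one, i.e.\ finite-dimensionality --- but the cited theorem carries the same hypothesis, so the two arguments stand on equal footing; you are right to flag the degenerate case of an algebra with no maximal subalgebras, which is vacuous in finite dimensions.
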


\begin{proof}
Let $\mathfrak{X}$ be any class of algebras.
\begin{enumerate}
\item Let $A\in \opd_0\ops\mathfrak{X}$. Then there exist algebras $A_1,\ldots, A_n\in \mathfrak{X}$ with subalgebras $H_1,\ldots, H_n$, respectively such that 
\[A\cong \bigoplus\nolimits_{j=1}^n H_j.\]
Since $ \bigoplus\nolimits_{j=1}^n H_j$  can be identified with the obvious subgroup of $ \bigoplus\nolimits_{j=1}^n A_j\in \opd_0\mathfrak{X}$  we conclude that  $A\in \ops \opd_0 \mathfrak{X}$. This proves that $\opd_0\ops \leq \ops\opd_0$ and then by using Lemma \ref{lemma1.16} the map $\ops\opd_0$ is a closure operation.

\item Let $A\in \opd_0\ope_\Phi \mathfrak{X}$. Then there exist algebras $A_1,\ldots, A_n\in \ope_\Phi\mathfrak{X}$ satisfying
\[A\cong  \bigoplus\nolimits_{j=1}^n A_j.\]
For all $j\in\{1,\ldots, n\}$ we have that there exist $I_j\unlhd A_j$ with $I_j\leq \Phi(A_j)$ and $A_j/I_j\in \mathfrak{X}$. Since 
$\Phi(A) =  \bigoplus\nolimits_{j=1}^n \Phi(A_j)$, by \cite[Theorem 4.8]{Towers1}, and $A/\bigoplus_{j=1}^n I_j \cong \bigoplus_{j=1}^n (A_j/I_j)\in \opd_0 \mathfrak{X}$, we have that $A/\bigoplus_{j=1}^n I_j\in \opd_0\mathfrak{X}$. On the other hand, $I:= \bigoplus_{j=1}^n I_j \leq \bigoplus_{j=1}^n \Phi(A_j) = \Phi(A)$, and therefore $A/I\in \opd_0\mathfrak{X}$ and $A\in \ope_\Phi\opd_0 \mathfrak{X}$. Thus $\opd_0\ope_\Phi \leq \ope_\Phi\opd_0$ and we use Lemma \ref{lemma1.16} to conclude.

\item Let $A\in \opq \ope_\Phi\mathfrak{X}$. Then $A\cong H/N$,  where $N\unlhd H$ and $H$ has an ideal $I$ such that $I\leq \Phi(H)$ and $H/I \in \mathfrak{X}$. Thus
\begin{equation}\label{Berlin1}
\frac{H/N}{(I+N)/N} \cong \frac{H}{I+N} \cong \frac{H/I}{(I+N)/N} \in \opq\mathfrak{X}.
\end{equation}
Furthermore, 
\begin{equation}\label{Berlin2}
(I+N)/N\leq  (\Phi(H)+N)/N\leq \Phi(H/N).
\end{equation}
Following \eqref{Berlin1} and \eqref{Berlin2} we have $A\in \ope_\Phi \opq\mathfrak{X}$. Then $\opq \ope_\Phi \leq \ope_\Phi \opq$ and the assertion is true. 
\end{enumerate}
\end{proof}

\begin{definition}
Let $A_1, \ldots, A_r$ be algebras, and let $D:= \bigoplus_{i=1}^r A_i$ be their direct sum.
\begin{enumerate}
\item The map $\pi_i : D\longrightarrow A_i$ defined by \[\pi(a_1,\ldots,a_r) := a_i\]
is called the \texttt{projection} of $D$ onto the $i$-th component.
		
\item A subalgebra $U$ of $D$ is called \texttt{subdirect} if $\pi_i(U) = A_i$, for each $i= 1,\ldots, r$.
\end{enumerate}
\end{definition}

\begin{definition}
A subdirect product is a subalgebra $A$ of a direct product $\prod_{i\in I} A_i$ such that every induced projection (the composite $p_js: A\longrightarrow A_j$ of a projection $p_j: \prod_{i\in I} A_i\longrightarrow A_j$ with the subalgebra inclusion $s : A \longrightarrow \prod_{i\in I} A_i$) is surjective.
\end{definition}

\begin{lemma}\label{subdirect1}
Let $I_1, \ldots, I_n$ be ideals of an algebra $A$. Then the function $f : A\longrightarrow \bigoplus_{j=1}^n (A/I_j)$ defined by 
\[f(x) = (x+I_1,\ldots, x+I_n)\]
is a algebra homomorphism; its kernel is $\bigcap_{j=1}^n I_j$ and its image $f(A)$ is subdirect. In particular, $A/\bigcap\nolimits_{j=1}^n I_j$ is isomorphic with a subdirect subalgebra of $\bigoplus_{j=1}^n (A/I_j)$.
\end{lemma}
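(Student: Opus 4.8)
The plan is to verify the three assertions --- that $f$ is an algebra homomorphism, that $\ker f = \bigcap_{j=1}^n I_j$, and that $f(A)$ is subdirect --- and then to deduce the final sentence by the first isomorphism theorem. First I would check that $f$ respects the algebra operations: $f$ is clearly $K$-linear since each coordinate $x \mapsto x + I_j$ is the canonical linear projection $A \to A/I_j$, and $f(xy) = (xy + I_1, \ldots, xy + I_n) = \big((x+I_1)(y+I_1), \ldots, (x+I_n)(y+I_n)\big) = f(x)f(y)$, using that multiplication in the direct sum $\bigoplus_{j=1}^n (A/I_j)$ is computed coordinatewise and that each quotient map is multiplicative. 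Note that this argument uses nothing about associativity, so it applies verbatim to an arbitrary non-associative algebra.

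Next I would compute the kernel: $f(x) = 0$ in $\bigoplus_{j=1}^n (A/I_j)$ if and only if $x + I_j = I_j$ for every $j$, i.e. $x \in I_j$ for all $j$, i.e. $x \in \bigcap_{j=1}^n I_j$; hence $\ker f = \bigcap_{j=1}^n I_j$. For the subdirectness of the image, fix $j$ and let $p_j : \bigoplus_{i=1}^n (A/I_i) \to A/I_j$ be the $j$-th projection; for any $a + I_j \in A/I_j$ we have $p_j(f(a)) = a + I_j$, so the composite $p_j \circ (f \text{ corestricted to } f(A))$ is surjective onto $A/I_j$. By the definition of a subdirect product this shows $f(A)$ is a subdirect subalgebra of $\bigoplus_{j=1}^n (A/I_j)$.

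Finally, applying the first isomorphism theorem for non-associative algebras to $f : A \to \bigoplus_{j=1}^n (A/I_j)$ gives $A/\ker f \cong f(A)$, that is, $A/\bigcap_{j=1}^n I_j$ is isomorphic to the subdirect subalgebra $f(A)$ of $\bigoplus_{j=1}^n (A/I_j)$, which is exactly the ``in particular'' claim. There is no serious obstacle here: the only point requiring a moment's care is confirming that the coordinatewise operations on $\bigoplus_{j=1}^n (A/I_j)$ make the coordinate maps simultaneously into algebra homomorphisms, so that $f$ inherits multiplicativity; everything else is a routine unwinding of definitions, and no associativity or commutativity is used anywhere.
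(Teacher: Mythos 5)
Your proof is correct and follows essentially the same route as the paper's: verify the homomorphism property coordinatewise, identify the kernel as $\bigcap_{j=1}^n I_j$, note that each projection maps $f(A)$ onto $A/I_j$, and conclude via the first isomorphism theorem. The only difference is that you spell out the multiplicativity check that the paper dismisses as ``clear.''
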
	

\begin{proof}
It is clear that $f$ is a homomorphism. Moreover, $f(x) = (I_1,\ldots, I_n)$, if and only if $x\in I_j$, for all $j\in\{1,\ldots, n\}$, and therefore $\ker(f) = \bigcap_{j=1}^n I_j$. Finally,  $\pi_j(f(A)) =\{x+I_j\mid x\in A \} = A/I_j$, whence $f(A)$ is subdirect. The final assertion follows from the isomorphism theorem.
\end{proof}

\begin{lemma}\label{lemma1.18}
Let $\mathfrak{X}$   be a class of algebras.
\begin{enumerate}
\item An algebra $A$ belongs to $\opr_0\mathfrak{X}$ if and only if $A$ is isomorphic with a subdirect subalgebra of a direct sum of a finite set of $\mathfrak{X}$-algebras. 
		
\item The class maps product $\opq\opr_0$ is a closure operation.
\end{enumerate}
\end{lemma}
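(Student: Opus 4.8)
The plan is to establish part (1) first — the subdirect-product description of $\opr_0\mathfrak{X}$ — and then use it, together with Lemma \ref{lemma1.16}, to deduce part (2).

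For part (1) I would argue both implications straight from the definition of $\opr_0$. For the forward direction: if $A\in\opr_0\mathfrak{X}$, pick ideals $I_1,\ldots,I_r\unlhd A$ with each $A/I_j\in\mathfrak{X}$ and $\bigcap_{j=1}^r I_j=0$, and feed exactly these into Lemma \ref{subdirect1}; the resulting homomorphism $f:A\to\bigoplus_{j=1}^r(A/I_j)$ has kernel $\bigcap_j I_j=0$, so it embeds $A$ as the subdirect subalgebra $f(A)$ of a finite direct sum of $\mathfrak{X}$-algebras. For the converse: given a subdirect subalgebra $U\leq D=\bigoplus_{j=1}^r A_j$ with every $A_j\in\mathfrak{X}$, let $K_j$ be the kernel of the restricted projection $\pi_j|_U:U\to A_j$; subdirectness makes $\pi_j|_U$ surjective, so $K_j\unlhd U$ with $U/K_j\cong A_j\in\mathfrak{X}$, while $\bigcap_j K_j=0$ because $D$ is a direct sum. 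Hence $U$, and therefore $A$, lies in $\opr_0\mathfrak{X}$. This part is just bookkeeping on top of Lemma \ref{subdirect1}.

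For part (2), since $\opq$ and $\opr_0$ are closure operations, Lemma \ref{lemma1.16} (taken with $\opec_1=\opq$, $\opec_2=\opr_0$) reduces the statement ``$\opq\opr_0$ is a closure operation'' to the single inclusion $\opr_0\opq\leq\opq\opr_0$. To prove it I would start from $A\in\opr_0\opq\mathfrak{X}$, fix ideals $I_1,\ldots,I_r\unlhd A$ with $\bigcap_j I_j=0$ and $A/I_j\in\opq\mathfrak{X}$, choose $H_j\in\mathfrak{X}$ with epimorphisms $\varphi_j:H_j\to A/I_j$, and form the iterated fibered product over $A$,
\[B:=\{(h_1,\ldots,h_r,a)\mid h_j\in H_j,\ a\in A,\ \varphi_j(h_j)=a+I_j\ \text{for all}\ j\}\leq H_1\oplus\cdots\oplus H_r\oplus A,\]
which is a subalgebra by an argument analogous to Lemma \ref{Direct-product}. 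Then I would check three things: (i) the projection $B\to A$ onto the last coordinate is surjective, using surjectivity of each $\varphi_j$; (ii) each projection $\sigma_k:B\to H_k$ is surjective, using surjectivity of the canonical map $A\to A/I_k$ to produce a suitable $a$ and then surjectivity of the remaining $\varphi_j$ for the other coordinates, so that $\ker\sigma_k\unlhd B$ with $B/\ker\sigma_k\cong H_k\in\mathfrak{X}$; and (iii) $\bigcap_{k=1}^r\ker\sigma_k$ consists of tuples $(0,\ldots,0,a)\in B$, forcing $a\in\bigcap_j I_j=0$, so this intersection is $0$. By (ii) and (iii), $B\in\opr_0\mathfrak{X}$; combined with the epimorphism of (i), $A\in\opq\opr_0\mathfrak{X}$. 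One could instead invoke part (1) to replace $A$ by a subdirect subalgebra of $\bigoplus_j A_j$ with $A_j\in\opq\mathfrak{X}$ and then pull back along $\bigoplus_j\varphi_j$; the verification is identical.

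The step I expect to require the most care is choosing the right connecting algebra in part (2): one must amalgamate all the $H_j$ over $A$ simultaneously — a single $B$ mapping onto $A$ and onto every $H_k$ with $\bigcap_k\ker\sigma_k=0$ — rather than pairing each $H_j$ with $A$ separately through Lemma \ref{Direct-product}. Once $B$ is written down, the remaining verifications (that it is a subalgebra, that the projections are epimorphisms, that kernels of homomorphisms are ideals, and that $\bigcap_j I_j=0$ kills the relevant intersection) are all routine.
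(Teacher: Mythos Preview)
Your proposal is correct and follows essentially the same strategy as the paper: part~(1) is handled identically via Lemma~\ref{subdirect1}, and part~(2) is reduced via Lemma~\ref{lemma1.16} to the inclusion $\opr_0\opq\leq\opq\opr_0$, which is then proved by a pullback construction.

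The only cosmetic difference in part~(2) is the packaging of the pullback. The paper first invokes part~(1) to embed $A$ subdirectly into $\bigoplus_j(H_j/I_j)$ (with $H_j\in\mathfrak{X}$ and $I_j\unlhd H_j$), then takes $H:=\alpha^{-1}(A)$ under the natural map $\alpha:\bigoplus_jH_j\to\bigoplus_j(H_j/I_j)$, checks that $H$ is subdirect in $\bigoplus_jH_j$ (hence $H\in\opr_0\mathfrak{X}$ by part~(1) again), and reads off $A\cong H/(H\cap\ker\alpha)\in\opq\opr_0\mathfrak{X}$. Your fibered product $B\leq H_1\oplus\cdots\oplus H_r\oplus A$ carries an extra $A$-coordinate, which is redundant (the projection $B\to\bigoplus_jH_j$ is injective since $\bigcap_jI_j=0$, and its image is exactly the paper's $H$) but makes the epimorphism onto $A$ immediate without re-invoking part~(1). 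You in fact mention the paper's variant yourself as the alternative route at the end of your proposal; the two constructions are isomorphic and the verifications are the same.
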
	

\begin{proof}
\begin{enumerate}
\item Let $A\in \opr_0\mathfrak{X}$. Then there exist ideals $I_1,\ldots, I_n$ of $A$ such that $A/I_j\in \mathfrak{X}$, for all $j\in\{1,\ldots, n\}$ and $\bigcap_{j=1}^n I_j=0$. By using Lemma \ref{subdirect1} we have that $A$  is isomorphic with a subdirect subalgebra of $\bigoplus_{j=1}^n (A/I_j)$. 

Reciprocally, if $f : A\longrightarrow \bigoplus_{j=1}^n I_j$ is a monomorphism with $f(A)$ subdirect and each $I_j\in \mathfrak{X}$. We consider now the following functions:

\tikzset{node distance=2cm, auto}
\begin{center}
	\begin{tikzpicture}
\node (C) {$A$};
\node (P) [below of=C] {$\bigoplus_{j=1}^n I_j$};
\node (Ai) [right of=P] {$I_j$};
\draw[->] (C) to node {$\pi_j f$} (Ai);
\draw[->, dashed] (C) to node [swap] {$f$} (P);
\draw[->] (P) to node [swap] {$\pi_j$} (Ai);
\end{tikzpicture}
\end{center}
Since $f(A)$ is subdirect we have for all  $j\in\{1,\ldots, n\}$ that 
\begin{equation}
\mathrm{Im}(\pi_j f) = \pi_j (f(A)) = I_j.
\end{equation}
We define $N_j := \ker(\pi_j f)$. Then $A/N_j\cong I_j\in \mathfrak{X}$ and $\bigcap_{j=1}^n I_j= \ker(f)= 0$. Thus $A\in \opr_0 \mathfrak{X}$.
	
\item  Let $A\in \opr_0\opq \mathfrak{X}$. Then, using (1), there exist algebras $H_1,\ldots, H_n\in \mathfrak{X}$ and ideals $I_j\unlhd H_j$, for $j\in\{1,\ldots, n\}$, such that $A\leq \bigoplus_{j=1}^n (H_j/I_j)$ and $\pi_j(A) = H_j/I_j$, for all $j\in\{1,\ldots, n\}$.
Let $\alpha$ denote the natural homomorphism from $\bigoplus_{j=1}^n H_j$ onto $\bigoplus_{j=1}^n (H_j/I_j)$ and define $H$ as the inverse image of $A$ under $\alpha$. To prove that $H$ is subdirect in $\bigoplus_{j=1}^n H_j$ we consider the following maps:

\begin{center}
	\begin{tikzpicture}
\matrix (m) [matrix of math nodes,row sep=3em,column sep=3em,minimum width=3em]
{H\leq \bigoplus_{j=1}^n H_j & \bigoplus_{j=1}^n (H_j/I_j)\geq A \\
	H_j & H_j/I_j \\};
\path[-stealth]
(m-1-1) edge node [left] {$\pi_j$} (m-2-1)
edge [] node [above] {$\alpha$} (m-1-2)
(m-2-1.east|-m-2-2) edge node [below] {$\gamma_j$}
node [above] {} (m-2-2)
(m-1-2) edge node [right] {$\tilde{\pi}_j$} (m-2-2);
\end{tikzpicture}
\end{center}
Note that 
\[H_j/I_j\cong \tilde{\pi}_j(A) = \tilde{\pi}_j(\alpha(H)) = \gamma_j(\pi_j(H)) = \pi_j(H)/I_j.\]
Then, $\pi_j(H)=H_j$ for all $j\in\{1,\ldots, n\}$ and this shows that $H\in \opr_0\mathfrak{X}$. Moreover, $A\cong H/(H\cap \ker(\alpha)) \in \opq\opr_0\mathfrak{X}$. Thus  $\opr_0\opq \leq \opq\opr_0$, and so the map $\opq\opr_0$ is a closure operation.
\end{enumerate}
\end{proof}

\begin{remark}
If $\mathcal{S}$ is a set of algebras, we use $(\mathcal{S})$ to denote the smallest class of algebras containing $\mathcal{S}$, and when $\mathcal{S} = \{A\}$, a singleton, we write $(A)$ rather than $(\{A\})$.
\end{remark}

We end this section by describing unary closure operations and a useful exponential notation for this maps.
  
\begin{definition}
A closure operation $\opec$ is called unary if $\opec\mathfrak{X} =  \bigcup \{\opec(A) \mid A\in\mathfrak{X} \}$ for all classes $\mathfrak{X}$. If $\opec$ is a unary closure operation and $\mathfrak{X}$ a class of algebras, we define 
\[\mathfrak{X}^{\opec} = (A\mid \opec(A) \subseteq\mathfrak{X}).\] 
\end{definition}
 
\begin{remarks} 
\begin{enumerate}
\item The maps $\opq$, $\ops$ and $\ops_n$ are examples of unary operations whereas $\opd_0$, $\opr_0$ and $\opn_0$ are not.
\item It follows from definition of unary operation that $\mathfrak{X}^{\opec}$ is the unique largest $\opec$-closed class contained in $\mathfrak{X}$. In particular holds $\opec(\mathfrak{X}^{\opec}) = \mathfrak{X}^{\opec}$.
\end{enumerate}
\end{remarks}

\begin{lemma}\label{lemma1.20}
Let $\opec_1$ and $\opec_2$ be closure operations and $\mathfrak{X}$ a  $\opec_2$-closed class of algebras. If $\opec_1$  is unary and $\opec_1\opec_2 \leq \opec_2\opec_1$, then 
$\mathfrak{X}^{\opec_1}$ is a $\opec_2$-closed class.
\end{lemma}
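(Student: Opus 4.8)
The plan is to establish $\opec_2(\mathfrak{X}^{\opec_1}) \subseteq \mathfrak{X}^{\opec_1}$; the reverse inclusion is automatic because $\opec_2$ is expanding, so the two together give $\opec_2(\mathfrak{X}^{\opec_1}) = \mathfrak{X}^{\opec_1}$. I would use the remark following the definition of unary operations, which tells us that since $\opec_1$ is unary, $\mathfrak{X}^{\opec_1}$ is the unique largest $\opec_1$-closed class contained in $\mathfrak{X}$ and in particular $\opec_1(\mathfrak{X}^{\opec_1}) = \mathfrak{X}^{\opec_1}$. Consequently it suffices to check that $\opec_2(\mathfrak{X}^{\opec_1})$ is a $\opec_1$-closed class contained in $\mathfrak{X}$, for then it is contained in the largest such class, namely $\mathfrak{X}^{\opec_1}$.

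First I would verify containment in $\mathfrak{X}$: since $\mathfrak{X}^{\opec_1} \subseteq \mathfrak{X}$ and $\opec_2$ is monotonic, $\opec_2(\mathfrak{X}^{\opec_1}) \subseteq \opec_2(\mathfrak{X}) = \mathfrak{X}$, the last equality because $\mathfrak{X}$ is $\opec_2$-closed. Next I would show $\opec_2(\mathfrak{X}^{\opec_1})$ is $\opec_1$-closed. Applying the hypothesis $\opec_1\opec_2 \leq \opec_2\opec_1$ to the class $\mathfrak{X}^{\opec_1}$ gives
\[\opec_1\big(\opec_2(\mathfrak{X}^{\opec_1})\big) \subseteq \opec_2\big(\opec_1(\mathfrak{X}^{\opec_1})\big) = \opec_2(\mathfrak{X}^{\opec_1}),\]
where the equality uses that $\mathfrak{X}^{\opec_1}$ is $\opec_1$-closed. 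Since $\opec_1$ is expanding we also have $\opec_2(\mathfrak{X}^{\opec_1}) \subseteq \opec_1\big(\opec_2(\mathfrak{X}^{\opec_1})\big)$, whence $\opec_1\big(\opec_2(\mathfrak{X}^{\opec_1})\big) = \opec_2(\mathfrak{X}^{\opec_1})$.

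Putting these together, $\opec_2(\mathfrak{X}^{\opec_1})$ is a $\opec_1$-closed subclass of $\mathfrak{X}$, hence $\opec_2(\mathfrak{X}^{\opec_1}) \subseteq \mathfrak{X}^{\opec_1}$, and the expanding property of $\opec_2$ finishes the proof. I do not anticipate any genuine obstacle; the only points requiring care are recognising that the unariness of $\opec_1$ is exactly what licenses the description of $\mathfrak{X}^{\opec_1}$ as the largest $\opec_1$-closed subclass of $\mathfrak{X}$, and remembering to apply the commutation hypothesis $\opec_1\opec_2 \leq \opec_2\opec_1$ to $\mathfrak{X}^{\opec_1}$ rather than to $\mathfrak{X}$.
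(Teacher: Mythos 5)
Your proposal is correct and follows essentially the same route as the paper: the paper's proof is exactly the chain $\opec_1\opec_2\mathfrak{X}^{\opec_1}\subseteq\opec_2\opec_1\mathfrak{X}^{\opec_1}=\opec_2\mathfrak{X}^{\opec_1}\subseteq\opec_2\mathfrak{X}=\mathfrak{X}$, from which it concludes that $\opec_2\mathfrak{X}^{\opec_1}$ is a $\opec_1$-closed subclass of $\mathfrak{X}$ and hence contained in $\mathfrak{X}^{\opec_1}$. You have merely unpacked the same argument in more detail, correctly identifying that unariness is what justifies describing $\mathfrak{X}^{\opec_1}$ as the largest $\opec_1$-closed subclass of $\mathfrak{X}$.
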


\begin{proof}
We have $\opec_1 \opec_2 \mathfrak{X}^{\opec_1} \subseteq \opec_2 \opec_1 \mathfrak{X}^{\opec_1} = \opec_2 \mathfrak{X}^{\opec_1} \subseteq \opec_2 \mathfrak{X} = \mathfrak{X}$, and therefore $\opec_2 \mathfrak{X}^{\opec_1}$ is a $\opec_1$-closed subclass of $\mathfrak{X}$. Consequently $\opec_2 \mathfrak{X}^{\opec_1} \subseteq \mathfrak{X}^{\opec_1}$, and it follows that $\mathfrak{X}^{\opec_1}$ is $\opec_2$-closed.
\end{proof}

\section{Classes defined by closure properties}
Schunck classes, formations, varieties, and Fitting classes are example of classes of algebras which satisfy certain closure properties. This classes are the main subject of this section.

\begin{definition}
\begin{enumerate}
\item A non-empty $\opq$-closed class $\mathfrak{H}$ is called a \texttt{homomorph}. That is, $\mathfrak{H}$ contains, along with an algebra $A$, all epimorphic images of $A$.
\item A homomorph which is also $\opr_0$-closed is called a \texttt{formation}. By lemma \ref{lemma1.18} (2) a formation is precisely a $\opq\opr_0$-closed class, and classes which are simultaneously closed under $\ops$, $\opq$, and $\opd_0$ are formations. Furthermore, $\langle \opq, \opr_0 \rangle\mathfrak{X} =  \opq\opr_0\mathfrak{X}$ is the formation generated by the class $\mathfrak{X}$.
\item A homomorph which is also $\opr$-closed is called a \texttt{variety}. 
\item A homomorph which is also $\opp$-closed is called a \texttt{Schunck class}.
\item A non-empty class $\mathfrak{X}$ is called a \texttt{Fitting class} if and only if, $\langle \ops_n, \opn_0 \rangle\mathfrak{X} = \mathfrak{X}$.
\end{enumerate}	
\end{definition}

\begin{lemma}
Let $\mathfrak{X}$  be an $\opr$-closed class of algebras and $L$ an algebra. Then the set
\[\mathscr{S}(\mathfrak{X},L) := \{I\mid I\unlhd L, \ L/I\in \mathfrak{X}, \}\]
partially ordered by inclusion, has a unique minimal element, denoted by $L^\mathfrak{X}$ and called the $\mathfrak{X}$-residual of $L$. If $\mathfrak{X}$ is a formation and $\varphi : L \longrightarrow \varphi(L)$ is an epimorphism, then $\varphi(L^\mathfrak{X}) = \varphi(L)^\mathfrak{X}$.
\end{lemma}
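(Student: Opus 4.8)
The plan is to prove the two assertions in turn. For the existence and uniqueness of $L^{\mathfrak X}$, I would first check that $\mathscr{S}(\mathfrak{X},L)$ is non-empty: since $\mathfrak X$ is $\opr$-closed it contains the zero algebra (the empty family of ideals witnesses $0\in\opr\mathfrak X$), so $I=L$ lies in $\mathscr{S}(\mathfrak{X},L)$ because $L/L\cong 0\in\mathfrak X$. Then, writing $\mathscr{S}(\mathfrak{X},L)=\{I_j\mid j\in J\}$, set $L^{\mathfrak X}:=\bigcap_{j\in J}I_j$, again an ideal of $L$. For each $j$ the ideal $I_j/L^{\mathfrak X}$ of $L/L^{\mathfrak X}$ satisfies $(L/L^{\mathfrak X})/(I_j/L^{\mathfrak X})\cong L/I_j\in\mathfrak X$ by the isomorphism theorem, while $\bigcap_{j\in J}(I_j/L^{\mathfrak X})=0$; hence $L/L^{\mathfrak X}\in\opr\mathfrak X=\mathfrak X$. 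So $L^{\mathfrak X}\in\mathscr{S}(\mathfrak{X},L)$, and being contained in every member it is the unique minimal (in fact least) element.

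For the second assertion, let $\mathfrak X$ be a formation and $\varphi:L\longrightarrow\varphi(L)$ an epimorphism with kernel $K$; here I take for granted that the residuals $L^{\mathfrak X}$ and $\varphi(L)^{\mathfrak X}$ exist (this is automatic when the algebras are finite-dimensional, since then a descending chain of ideals stabilises and $\opr_0$-closure of a formation suffices to run the argument of the first part). I would prove the two inclusions separately. For $\varphi(L)^{\mathfrak X}\subseteq\varphi(L^{\mathfrak X})$: the image $\varphi(L^{\mathfrak X})$ is an ideal of $\varphi(L)$, and via $\varphi(L)\cong L/K$ one gets $\varphi(L)/\varphi(L^{\mathfrak X})\cong L/(L^{\mathfrak X}+K)$, which is an epimorphic image of $L/L^{\mathfrak X}\in\mathfrak X$ and hence lies in $\mathfrak X$ because $\mathfrak X$ is $\opq$-closed; thus $\varphi(L^{\mathfrak X})\in\mathscr{S}(\mathfrak X,\varphi(L))$, and minimality of $\varphi(L)^{\mathfrak X}$ gives the inclusion.

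For the reverse inclusion $\varphi(L^{\mathfrak X})\subseteq\varphi(L)^{\mathfrak X}$: put $M:=\varphi(L)^{\mathfrak X}$ and $N:=\varphi^{-1}(M)$, an ideal of $L$. Then $L/N\cong\varphi(L)/M\in\mathfrak X$, so $N\in\mathscr{S}(\mathfrak X,L)$, whence $L^{\mathfrak X}\subseteq N$ and therefore $\varphi(L^{\mathfrak X})\subseteq\varphi(N)=M$, using surjectivity of $\varphi$. Combining the two inclusions yields $\varphi(L^{\mathfrak X})=\varphi(L)^{\mathfrak X}$.

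The routine content is the bookkeeping with the isomorphism theorems and the standard facts that images and preimages of ideals under an epimorphism are ideals. The only genuinely delicate point is the existence of the residual in the formation case: a formation need only be $\opr_0$-closed, so without a chain condition one cannot simply intersect all members of $\mathscr{S}(\mathfrak X,L)$ as in the first part. I expect this is where finite-dimensionality (or an explicit hypothesis that the residuals exist) must be invoked, and pinning that down is the main thing to handle carefully.
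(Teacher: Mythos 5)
Your proof is correct and follows essentially the same route as the paper's: the residual is obtained by intersecting all members of $\mathscr{S}(\mathfrak{X},L)$ and invoking $\opr$-closure, and the equality $\varphi(L^{\mathfrak X})=\varphi(L)^{\mathfrak X}$ is proved by the same two inclusions (preimage of the residual for one direction, $\opq$-closure for the other). Your worry about existence of the residual in the formation case is resolved by reading the lemma's hypothesis that $\mathfrak{X}$ is $\opr$-closed as standing throughout, so the first part already supplies both residuals.
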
	

\begin{proof}
Let $R:= \bigcap \{I\mid I\in \mathscr{S}(\mathfrak{X},L) \}$. It is clear that $R$ is an ideal of $L$. We consider now the set
\[\mathcal{T}:= \{I/R\mid I\in \mathscr{S}(\mathfrak{X},L)\}.\] It is verified that $\bigcap \mathcal{T} = 0$ and $(L/R)/(I/R) \in \mathfrak{X}$. Therefore $L/R\in \opr \mathfrak{X} = \mathfrak{X}$ and we have $R\in \mathscr{S}(\mathfrak{X},L) $. Then $R$ is the desired smallest element of $\mathscr{S}(\mathfrak{X},L)$, which is denoted by $L^\mathfrak{X}$.
	
Let $\varphi : L \longrightarrow \varphi(L)$ be an epimorphism. Let $R:= L^\mathfrak{X}$, $T:= \varphi(L)^\mathfrak{X}$ and $I:= \varphi^{-1}(T)$, the inverse image of $T$ in $L$. Then by using the isomorphism theorem we have 
\[L/I \cong \varphi(L)/T \in  \mathfrak{X},\]
and therefore $L^\mathfrak{X}\leq I$ and so $\varphi( L^\mathfrak{X}) \leq \varphi(I) = \varphi(\varphi^{-1}(T)) = T = \varphi(L)^\mathfrak{X}$. Conversely, $\varphi(L)/ \varphi( L^\mathfrak{X}) \in \opq (L/L^\mathfrak{X}) \subseteq \opq\mathfrak{X} = \mathfrak{X}$, and then $T\leq \varphi( L^\mathfrak{X})$. Thus $\varphi(L)^\mathfrak{X} =\varphi( L^\mathfrak{X})$.
\end{proof}

\begin{definition}\label{d}
	Let $\mathfrak{X}$  be a homomorph. We say that a subalgebra $B$ of $A$ is an \texttt{$\mathfrak{X}$-projector} if \begin{enumerate}
		\item  $B \in \mathfrak{X}$, and
		\item $B\leq C\leq A$, $C_0\unlhd C$, $C/C_0\in \mathfrak{X}$ implies that $B+C_0=C$.
	\end{enumerate}
\end{definition}

We denote the set of all $\mathfrak{X}$-projectors of $A$ by $Proj_\mathfrak{X}(A)$ and write $B\in Proj_\mathfrak{X}(A)$ for $B$ is an $\mathfrak{X}$-projector of $A$. In the following we assume that all algebras $A$ are finite dimensional and solvable; that is, $A\in \mathfrak{E}\cap \mathfrak{S}$.

\begin{lemma}\label{1} If $B\in Proj_\mathfrak{X}(A)$ , then $B$ is maximal in the set $\{ C\leq A \mid C\in \mathfrak{X}\}$; in particular, if $A\in \mathfrak{X}$, then $B=A$.  
\end{lemma}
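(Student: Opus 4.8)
The plan is to prove Lemma~\ref{1} directly from Definition~\ref{d}. Suppose $B \in Proj_{\mathfrak{X}}(A)$. First I would establish the maximality claim: let $C$ be any subalgebra of $A$ with $B \leq C$ and $C \in \mathfrak{X}$, and show $B = C$. To do this, I apply condition (2) of Definition~\ref{d} with this $C$ and with $C_0 = 0$. Since $C \in \mathfrak{X}$ and $C/0 \cong C \in \mathfrak{X}$, the hypotheses of (2) are satisfied (noting $0 \unlhd C$), so we conclude $B + 0 = C$, i.e.\ $B = C$. Hence no $\mathfrak{X}$-subalgebra of $A$ properly contains $B$, which is exactly the assertion that $B$ is maximal in $\{C \leq A \mid C \in \mathfrak{X}\}$.

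For the particular case, suppose additionally that $A \in \mathfrak{X}$. Then $A$ itself lies in the set $\{C \leq A \mid C \in \mathfrak{X}\}$ and satisfies $B \leq A$. Applying the maximality just proved (equivalently, applying condition (2) once more with $C = A$ and $C_0 = 0$), we get $B = A$.

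The argument is essentially a one-line unwinding of the definition, so there is no serious obstacle; the only point requiring a moment's care is the legitimacy of taking $C_0 = 0$ in condition (2), which is fine since the zero subalgebra is an ideal of $C$ and $C/0 \in \mathfrak{X}$ whenever $C \in \mathfrak{X}$ (using that $\mathfrak{X}$, being a homomorph, is closed under isomorphism). One should also implicitly use that the set $\{C \le A \mid C \in \mathfrak{X}\}$ is non-empty and does contain $B$, which is guaranteed by condition (1) of the definition of an $\mathfrak{X}$-projector. No finiteness or solvability of $A$ is actually needed for this lemma, although those standing assumptions are in force in the surrounding text.
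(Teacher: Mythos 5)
Your proof is correct and is exactly the paper's argument: the paper's entire proof is ``put $C_0=0$ in Definition~\ref{d}~(2)'', which is the step you carry out (with the additional, harmless, verification that $0\unlhd C$ and $C/0\cong C\in\mathfrak{X}$). Nothing further is needed.
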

\begin{proof} If $B\leq C\leq A$ and $C\in \mathfrak{X}$, put $C_0=0$ in Definition \ref{d} (2).
\end{proof}

\begin{lemma}\label{2} \begin{enumerate}
		\item If $B\in Proj_\mathfrak{X}(A)$, and $B\leq C\leq A$, then $B\in Proj_\mathfrak{X}(C)$.
		\item If $B\in Proj_\mathfrak{X}(A)$ and $I\unlhd A$, then $B+I/I \in Proj_\mathfrak{X}(A/I)$.
	\end{enumerate}
\end{lemma}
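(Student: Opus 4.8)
The plan is to verify directly, in each case, the two defining conditions of an $\mathfrak{X}$-projector from Definition \ref{d}, exploiting that those conditions only constrain subalgebras lying between the projector and the ambient algebra, together with the standard isomorphism and correspondence theorems.

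For part (1), condition (1) of Definition \ref{d} holds at once, since $B\in\mathfrak{X}$ is already part of the hypothesis $B\in Proj_\mathfrak{X}(A)$. For condition (2), I would take arbitrary subalgebras $B\le D\le C$ with $D_0\unlhd D$ and $D/D_0\in\mathfrak{X}$; because $C\le A$, the chain $B\le D\le A$ together with $D_0\unlhd D$ and $D/D_0\in\mathfrak{X}$ is of exactly the shape to which $B\in Proj_\mathfrak{X}(A)$ applies, so $B+D_0=D$. Hence $B\in Proj_\mathfrak{X}(C)$. No difficulty arises here.

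For part (2), write $\overline{A}=A/I$ and $\overline{B}=(B+I)/I$. First, to check condition (1): by the second isomorphism theorem $\overline{B}\cong B/(B\cap I)$, and since $B\in\mathfrak{X}$ and $\mathfrak{X}$, being a homomorph, is $\opq$-closed, we obtain $\overline{B}\in\mathfrak{X}$. Next, to check condition (2): let $\overline{B}\le\overline{C}\le\overline{A}$ with $\overline{C}_0\unlhd\overline{C}$ and $\overline{C}/\overline{C}_0\in\mathfrak{X}$. By the correspondence theorem there are subalgebras $C_0$ and $C$ of $A$ with $I\le C_0\unlhd C\le A$, $\overline{C}=C/I$ and $\overline{C}_0=C_0/I$; moreover $B\le C$, since $B+I\le C$, and $C/C_0\cong\overline{C}/\overline{C}_0\in\mathfrak{X}$ by the third isomorphism theorem. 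Applying the projector property of $B$ in $A$ to the chain $B\le C\le A$ yields $B+C_0=C$. Passing to the quotient by $I$ and using $I\le C_0$, we get $\overline{B}+\overline{C}_0=(B+I+C_0)/I=(B+C_0)/I=C/I=\overline{C}$, which is condition (2). Thus $(B+I)/I\in Proj_\mathfrak{X}(A/I)$.

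The only point requiring a little care is the verification that $(B+I)/I\in\mathfrak{X}$ in part (2), which is precisely where the assumption that $\mathfrak{X}$ is a homomorph (rather than merely an abstract class of algebras) is used; all the remaining steps are routine translations through the isomorphism and correspondence theorems, so I anticipate no genuine obstacle.
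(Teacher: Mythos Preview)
Your proof is correct and is exactly the routine verification the paper has in mind: the paper's own proof reads in its entirety ``These are clear.'' You have simply written out the direct check of Definition~\ref{d} in each case, using the correspondence and isomorphism theorems in part~(2) and the $\opq$-closure of $\mathfrak{X}$ to see that $(B+I)/I\in\mathfrak{X}$; there is nothing to add or correct.
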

\begin{proof} These are clear.
\end{proof}

\begin{lemma}\label{5} Suppose that $\mathfrak{X}$ is a homomorph which contains a non-zero algebra. Then the one-dimensional solvable algebra is in $\mathfrak{X}$, and $B\in Proj_\mathfrak{X}(A)$ , $A\neq 0$ implies that $B\neq 0$
\end{lemma}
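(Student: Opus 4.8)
The plan is to first establish that the one-dimensional solvable algebra (the $1$-dimensional algebra with zero multiplication) lies in $\mathfrak{X}$, and then to feed this algebra into the projector condition of Definition \ref{d} to rule out $B=0$.

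For the first assertion, let $A$ be a non-zero algebra in $\mathfrak{X}$; by the standing hypothesis $A\in\mathfrak{E}\cap\mathfrak{S}$. First I would note that $A^2\neq A$: if $A^2=A$ then $A^{(2)}=A^2=A$ and inductively $A^{(n)}=A$ for every $n$, contradicting the solvability of the non-zero algebra $A$. Hence $A/A^2$ is a non-zero algebra with trivial multiplication. Since $\dim A<\infty$, pick a subspace $W$ with $A^2\leq W\leq A$ and $\dim(A/W)=1$; then $AW,WA\subseteq A^2\subseteq W$, so $W\unlhd A$, and $A/W$ is one-dimensional with zero multiplication, i.e. it is the one-dimensional solvable algebra. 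Composing the canonical epimorphisms $A\to A/A^2\to A/W$ and using that $\mathfrak{X}$ is $\opq$-closed gives that the one-dimensional solvable algebra belongs to $\mathfrak{X}$.

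For the second assertion, suppose toward a contradiction that $B\in Proj_\mathfrak{X}(A)$ with $A\neq 0$ but $B=0$. As above, $A^2\neq A$, so we may choose a subspace $C_0$ with $A^2\leq C_0\leq A$ and $\dim(A/C_0)=1$; then $C_0\unlhd A$ and $A/C_0$ is the one-dimensional solvable algebra, which lies in $\mathfrak{X}$ by the first part. Now apply Definition \ref{d}(2) with $C=A$: the hypotheses $B\leq C\leq A$, $C_0\unlhd C$ and $C/C_0\in\mathfrak{X}$ all hold, so we must have $B+C_0=C=A$. But $B+C_0=0+C_0=C_0\subsetneq A$, a contradiction. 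Therefore $B\neq 0$.

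The proof is essentially routine; the only points needing a little care — and the closest thing to an obstacle — are verifying that $A^2\neq A$ for a non-zero solvable algebra (so that proper hyperplanes containing $A^2$ exist, which is what makes the relevant quotients available) and checking that the resulting one-dimensional quotient is the \emph{solvable} one-dimensional algebra rather than the one spanned by an idempotent; both follow at once from the observation that the quotient has zero multiplication.
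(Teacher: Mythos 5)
Your proof is correct, and it diverges from the paper's in a worthwhile way on the second assertion. For the first assertion you and the paper do essentially the same thing: the paper just asserts that a non-zero solvable algebra has an ideal of codimension one and invokes $\opq$-closure, while you supply the construction (if $A^2=A$ then $A^{(n)}=A$ for all $n$, so $A^2\neq A$; any hyperplane $W\supseteq A^2$ is then an ideal with $A/W$ one-dimensional of zero multiplication) and you explicitly check that the quotient is the \emph{solvable} one-dimensional algebra rather than the idempotent one, a point worth making in this generality. For the second assertion the routes differ. The paper deduces $B\neq 0$ from Lemma \ref{1}, i.e.\ from the maximality of a projector among the $\mathfrak{X}$-subalgebras of $A$; for that to rule out $B=0$ one must still exhibit some non-zero subalgebra of $A$ lying in $\mathfrak{X}$ (for instance a one-dimensional subspace of the last non-zero term $A^{(r-1)}$ of the derived series, which is abelian and hence isomorphic to the one-dimensional solvable algebra already shown to be in $\mathfrak{X}$) --- a step the paper leaves implicit. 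You instead apply the covering condition of Definition \ref{d}(2) directly, taking $C=A$ and $C_0$ a codimension-one ideal containing $A^2$, which forces $A=B+C_0=C_0$, a contradiction. Your argument is self-contained and needs no subalgebra of $A$ in $\mathfrak{X}$ at all; the paper's is shorter once Lemma \ref{1} and the existence of a non-zero abelian subalgebra are granted. Both are valid.
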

\begin{proof} Suppose that $A\in Proj_\mathfrak{X}(A)$, $A\neq 0$. Since $A$ is solvable there exists $A_1\unlhd A$ such that $\dim (A/A_1)=1$. If $B\in Proj_\mathfrak{X}(A)$  and $A\neq 0$, then $B\neq 0$, by Lemma \ref{1}.
\end{proof}

\begin{lemma} Suppose that $B\in Proj_\mathfrak{X}(A)$ and $0\neq B \leq U \leq A$. If $U\unlhd V\leq A$, then $U=V$.
\end{lemma}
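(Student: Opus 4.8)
The plan is to reduce everything to the two structural facts about projectors already available: that a projector of $A$ contained in a subalgebra $V$ is again a projector of $V$, and its image in a quotient $V/I$ is a projector there (Lemma \ref{2}), together with the fact that a projector of a non-zero finite-dimensional solvable algebra must be non-zero (Lemma \ref{5}). The point is that since $B$ lies inside $U$, its image in $V/U$ collapses to $0$, and a homomorph admitting $0$ as a projector of some algebra forces that algebra to be trivial.

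Concretely, I would argue as follows. From $0\neq B\leq U\leq V\leq A$ and $B\in Proj_\mathfrak{X}(A)$, apply Lemma \ref{2}(1) with $C=V$ to get $B\in Proj_\mathfrak{X}(V)$. Since $U\unlhd V$, Lemma \ref{2}(2) then yields $(B+U)/U\in Proj_\mathfrak{X}(V/U)$. But $B\leq U$, so $(B+U)/U$ is the zero subalgebra of $V/U$; that is, $0\in Proj_\mathfrak{X}(V/U)$. Finally, because $B\neq 0$ and $B\in\mathfrak{X}$, the homomorph $\mathfrak{X}$ contains a non-zero algebra, so Lemma \ref{5} applies to $V/U$ (which is finite-dimensional and solvable, being a quotient of the subalgebra $V$ of $A\in\mathfrak{E}\cap\mathfrak{S}$): a projector of a non-zero algebra is non-zero. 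Since $V/U$ has $0$ as a projector, we must have $V/U=0$, i.e.\ $U=V$.

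I do not expect a genuine obstacle here; the only thing requiring attention is to notice that the hypothesis $B\neq 0$ is exactly what guarantees that $\mathfrak{X}$ is a homomorph containing a non-zero algebra, which is the precise input needed for Lemma \ref{5}, and that one must stay inside the standing class $\mathfrak{E}\cap\mathfrak{S}$ throughout (automatic, since subalgebras and quotients of finite-dimensional solvable algebras are again of that type). An alternative, slightly more self-contained route would be to assume $U<V$, choose a non-zero element $x$ in the last non-zero term of the derived series of $V/U$, take $W$ to be the preimage in $V$ of the one-dimensional zero-square subalgebra $Kx$, observe $U\unlhd W$ and that $W/U\in\mathfrak{X}$ by Lemma \ref{5}, and then apply Definition \ref{d}(2) to get $W=B+U=U$, a contradiction; but the lemma-chaining argument above is cleaner.
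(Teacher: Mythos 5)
Your argument is correct. Your primary route is a genuine variant of the paper's: the paper argues directly from Definition \ref{d}(2) by choosing, inside $V$, a subalgebra $W\supseteq U$ with $\dim W/U=1$ (possible by solvability), noting $W/U\in\mathfrak{X}$ by Lemma \ref{5}, and deducing $W=B+U=U$, a contradiction; you instead push the projector down into the quotient $V/U$ via Lemma \ref{2} and then invoke the nonvanishing statement of Lemma \ref{5} to force $V/U=0$. Both proofs ultimately rest on the same two ingredients (Lemma \ref{5}, which needs $0\neq B\in\mathfrak{X}$ and the standing hypothesis $A\in\mathfrak{E}\cap\mathfrak{S}$, and the covering property of projectors), but yours packages the covering property through Lemma \ref{2}(2) rather than re-deriving it from the definition, which is arguably cleaner and avoids constructing $W$ explicitly; the one point worth making explicit is that $0\in\mathfrak{X}$ (so that $0$ can legitimately be a projector), which holds since $\mathfrak{X}$ is a homomorph. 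Your sketched ``alternative, more self-contained route'' at the end is in fact essentially verbatim the paper's own proof.
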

\begin{proof} Suppose that $U\neq V$. There is a subalgebra $W$ of $V$ such that $\dim W/U=1$, since $A$ is solvable. Then $W/U\in \mathfrak{X}$, by Lemma \ref{5}, and $B+U=W$, putting $W=C$, $U=C_0$ in Definition \ref{d} (2). But $B\leq U$, so $U=W$, a contradiction.
\end{proof}

\begin{lemma}\label{6} Suppose that $B \unlhd A$, $U/B\in Proj_\mathfrak{X}(A/B)$ and $C\in Proj_\mathfrak{X}(U)$. Then $C\in Proj_\mathfrak{X}(A)$.
\end{lemma}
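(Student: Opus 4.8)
The statement is a transitivity property for $\mathfrak{X}$-projectors: if $B\unlhd A$, $U/B$ is an $\mathfrak{X}$-projector of $A/B$, and $C$ is an $\mathfrak{X}$-projector of $U$, then $C$ is an $\mathfrak{X}$-projector of $A$. The plan is to verify the two defining conditions of Definition \ref{d} directly. Condition (1), namely $C\in\mathfrak{X}$, is immediate since $C\in Proj_\mathfrak{X}(U)$. The work is all in condition (2): given $C\leq D\leq A$ with $D_0\unlhd D$ and $D/D_0\in\mathfrak{X}$, I must show $C+D_0=D$.

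\textbf{Key steps.} First I would pass to the quotient by $B$. Intersecting with $U$ is the wrong move at the outset; instead, look at $(D+B)/B\leq A/B$ together with the ideal $(D_0+B)/B$ of it — but one must be careful, since $D_0+B$ need not be contained in $D+B$ in a way that makes $(D_0+B)/B$ an ideal of $(D+B)/B$ unless $D_0\unlhd D$ is used correctly. The cleaner route: since $C\leq U$ and $B\unlhd A$ with $B\leq U$ (because $U/B\leq A/B$ forces $B\leq U$), consider the subalgebra $D\cap U$ and the subalgebra $(D\cap U)+B$. The strategy is:
\begin{enumerate}
\item Show $(D+B)/B$ contains $U/B = (C+B)/B$ after noting $C+B\leq U$, hence by the projector property of $U/B$ in $A/B$ applied to the chain $U/B\leq (D+B)/B\leq A/B$ with the ideal $((D_0+B)\cap(D+B)\ \text{mod}\ B)$ — whose quotient lies in $\mathfrak{X}$ by an isomorphism-theorem computation and the $\opq$-closure of $\mathfrak{X}$ — we get $U/B + (\text{that ideal}) = (D+B)/B$, i.e. $U + D_0 + B = D + B$.
\item Intersect the resulting equation with $U$: using $B\leq U$ and a modular-law argument, deduce $U + (D_0\cap U) = D\cap U$, so that $D_0\cap U$ is an ideal of $D\cap U$ with $(D\cap U)/(D_0\cap U)\cong$ (a subalgebra/quotient living in $\mathfrak{X}$), placing us inside $U$.
\item Now apply the projector property of $C$ in $U$ to the chain $C\leq D\cap U\leq U$ with ideal $D_0\cap U\unlhd D\cap U$ and $\mathfrak{X}$-quotient, obtaining $C+(D_0\cap U)=D\cap U$.
\item Combine: $C+D_0 \supseteq C+(D_0\cap U) = D\cap U$, and from step 1, $D = (D\cap U)+D_0$ (after translating the mod-$B$ equation back and using $B\leq U$), whence $C+D_0 = D$.
\end{enumerate}

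\textbf{Main obstacle.} The delicate point is step 1 and step 2: ensuring that the auxiliary subspace one feeds into the projector property of $U/B$ is genuinely an ideal of the relevant overalgebra and that its quotient really lies in $\mathfrak{X}$ — this requires a careful second/third isomorphism theorem computation together with the $\opq$-closure of the homomorph $\mathfrak{X}$, and the modular law to pass between $D+B$ and $D\cap U$. One must also use the standing hypothesis that all algebras are finite-dimensional and solvable (so that Lemma \ref{5} and the existence of codimension-one ideals are available) if at any stage a reduction to a one-dimensional factor is needed; I expect, however, that the direct isomorphism-theorem bookkeeping suffices and no induction on dimension is required. The translation back and forth across the quotient by $B$, keeping track of which sums and intersections are ideals in which algebras, is the part most prone to error and is where I would spend the most care.
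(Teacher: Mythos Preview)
Your approach is correct in outline and, notably, avoids the induction on $\dim A$ that the paper uses. The paper argues by induction: after establishing $C+B=U$ and hence $U\leq D+B$, it transfers the projector property of $U/B$ through the isomorphism $(D+B)/B\cong D/(B\cap D)$ to obtain $(U\cap D)/(B\cap D)\in Proj_\mathfrak{X}(D/(B\cap D))$ and $C\in Proj_\mathfrak{X}(U\cap D)$; if $D<A$ it then invokes the induction hypothesis on $D$, and only when $D=A$ does it argue directly. Your direct argument is essentially the paper's $D=A$ case carried out for arbitrary $D$ by interposing the modular law, so the case split and the induction both disappear. This is a genuinely cleaner route.

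Two points deserve attention. First, your step~2 as written, ``$U+(D_0\cap U)=D\cap U$'', is false on its face since $D_0\cap U\leq U$ makes the left side equal to $U$. What you actually need---and correctly state in step~4---is $(D\cap U)+D_0=D$. This follows from $U+D_0=D+B\supseteq D$ by the modular law for \emph{subspaces}: with $D_0\leq D$ one has $(U+D_0)\cap D=(U\cap D)+D_0$ as vector subspaces, and the left side equals $D$. The second isomorphism theorem then gives $(D\cap U)/(D_0\cap U)\cong D/D_0\in\mathfrak{X}$ (not merely a subalgebra of it, so $\opq$-closure suffices), and step~3 goes through. Second, the identity $U=C+B$, which you use to place $U/B$ below $(D+B)/B$, requires a word of justification: since $C\in Proj_\mathfrak{X}(U)$ and $B\unlhd U$, Lemma~\ref{2}(2) gives $(C+B)/B\in Proj_\mathfrak{X}(U/B)$, and as $U/B\in\mathfrak{X}$, Lemma~\ref{1} forces $(C+B)/B=U/B$. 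With these two clarifications your proof is complete and more direct than the paper's.
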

\begin{proof} We use induction on $\dim A$. The result is trivial if $\dim A=0$. Suppose that $C\leq D\leq A$, $D_0\unlhd D$ and $D/D_0\in \mathfrak{X}$. Then $D+B\geq C+B=U$, by Lemma \ref{1}, and $U/B\in Proj_\mathfrak{X}(C+B/B)$, by Lemma \ref{2} (2). But $C+B/B\cong C/C\cap B$ and so $U\cap C/B\cap C \in Proj_\mathfrak{X}(C/B\cap C)$.
	\par
	
	If $D< A$, then, since $C\in  Proj_\mathfrak{X}(U\cap D)$, $C\in  Proj_\mathfrak{X}(D)$ by induction and $C+D_0=D$. Thus, $C\in  Proj_\mathfrak{X}(A)$.
	\par
	
	So suppose that $D=A$. Since $A/(D_0+B)\in \mathfrak{X}$, $U/B+(D_0+B)/B=A/B$ and so $U+B+D_0=A$. Since $U\geq B$, $U+D_0=A$. Hence $U/D_0\cap U\cong A/D_0\in \mathfrak{X}$. It follows that $C+(D_0\cap U)=U$ and $C+D_0=U+D_0=A$.
\end{proof} 

We will call an ideal $B$ of $A$ \texttt{abelian} if $B^2=0$. Note that, if $A$ has an abelian ideal, then it will have a minimal abelian ideal. However, in general, a solvable algebra may not have an abelian ideal, as we will see later.

\begin{lemma}\label{3} Let $\mathfrak{X}$ be a homomorph and let $B$ be a minimal abelian ideal of $A$. Suppose that $A/B\in \mathfrak{X}$, $A \notin \mathfrak{X}$ and $C\in  Proj_\mathfrak{X}(A)$. Then $C$ complements $B$ in $A$.
\end{lemma}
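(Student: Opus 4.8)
The plan is to show that $C \cap B = 0$ and $C + B = A$, exploiting minimality of $B$, the fact that $B$ is abelian, and the defining property of an $\mathfrak{X}$-projector. First I would apply Lemma~\ref{2}~(2): since $B \unlhd A$ and $C \in Proj_\mathfrak{X}(A)$, the image $(C+B)/B$ is an $\mathfrak{X}$-projector of $A/B$. But $A/B \in \mathfrak{X}$, so by Lemma~\ref{1} the only $\mathfrak{X}$-projector of $A/B$ is $A/B$ itself; hence $(C+B)/B = A/B$, i.e. $C + B = A$. This disposes of the "spanning" half immediately.

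For complementation it remains to prove $C \cap B = 0$. Consider $C \cap B$: since $B$ is an ideal of $A$ and $C \leq A$, $C \cap B$ is an ideal of $C$; moreover $C \cap B \leq B$ and $B$ is abelian, so $C \cap B$ is an abelian ideal. The natural move is to show $C \cap B$ is in fact an ideal of $A$, whence minimality of $B$ forces $C \cap B = 0$ or $C \cap B = B$; the latter would give $B \leq C$, hence $C = C + B = A$, contradicting $A \notin \mathfrak{X}$ (since $C \in \mathfrak{X}$). To see that $C \cap B \unlhd A$: because $B^2 = 0$, for any $a \in A$ write $a = c + b$ with $c \in C$, $b \in B$ (using $C+B=A$); then for $x \in C \cap B$ we have $ax = cx + bx = cx$ and $xa = xc + xb = xc$, both lying in $C$ (as $x, c \in C$) and in $B$ (as $x \in B \unlhd A$), hence in $C \cap B$. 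Thus $C \cap B$ absorbs multiplication by all of $A$, so $C \cap B \unlhd A$, and the argument above finishes the proof.

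The step I expect to be the main obstacle is precisely verifying $C \cap B \unlhd A$; the clean way through is to use the spanning relation $A = C + B$ together with $B^2 = 0$ to reduce multiplication by an arbitrary element of $A$ to multiplication by an element of $C$, which is where abelianness of $B$ is essential. (Without $B^2 = 0$ the product $bx$ for $b, x \in B$ need not vanish and the argument collapses.) Everything else is a routine invocation of Lemmas~\ref{1} and~\ref{2} plus the hypotheses $A/B \in \mathfrak{X}$ and $A \notin \mathfrak{X}$.
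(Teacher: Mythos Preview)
Your proof is correct and follows essentially the same approach as the paper: obtain $C+B=A$ from the projector property (the paper applies Definition~\ref{d}(2) directly with $C\leq A$, $C_0=B$, rather than going through Lemmas~\ref{2}(2) and~\ref{1}, but this is the same idea), then use $B^2=0$ and $A=C+B$ to see that $C\cap B$ is an ideal of $A$, and invoke minimality of $B$ together with $C\neq A$ to conclude $C\cap B=0$. The paper's proof is terse and simply asserts ``$B\cap C$ is an abelian ideal of $A$, since $B$ is abelian''; your explicit verification of this step is exactly what is needed to justify that assertion.
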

\begin{proof} We have that $A=B+C$ and $C\neq A$, from Definition \ref{d} (2). Moreover, $B\cap C$ is an abelian ideal of $A$, since $B$ is abelian. Hence $B\cap C=0$. 
\end{proof}

\begin{lemma}\label{4} Let $\mathfrak{X}$ be a formation and let $B$ be a minimal abelian ideal of $A$. Suppose that $A/B\in \mathfrak{X}$, $A\notin \mathfrak{X}$ and that $C$ complements $B$ in $A$. Then $C\in \mathfrak{X}$.
\end{lemma}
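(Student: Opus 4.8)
The plan is to exploit the fact that $C$ is (up to isomorphism) a quotient of $A$, so that it will land in $\mathfrak{X}$ once we know $A/B\in\mathfrak{X}$ together with enough $\mathfrak{X}$-residual information, and then to use $\opr_0$-closure of the formation $\mathfrak{X}$ to push the conclusion through. First I would record the structural data: since $C$ complements $B$ in $A$ we have $A=B+C$ with $B\cap C=0$, and hence $C\cong A/B\in\mathfrak{X}$ would be immediate \emph{if} $C$ were an ideal — but it need not be. So instead I would pass to the $\mathfrak{X}$-residual $R:=A^{\mathfrak{X}}$ of $A$, which exists because a formation is in particular $\opr_0$-closed (indeed $\opr$-closed is not needed; $\opr_0$ suffices to build the smallest ideal with quotient in $\mathfrak{X}$ among finitely many, and here $A$ is finite dimensional so the intersection in $\mathscr{S}(\mathfrak{X},A)$ is a finite one). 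Since $A/B\in\mathfrak{X}$ we get $R\leq B$, and as $B$ is a minimal ideal of $A$ and $R\neq 0$ (because $A\notin\mathfrak{X}$ forces $R\neq 0$), minimality gives $R=B$. Thus $B=A^{\mathfrak{X}}$ is the $\mathfrak{X}$-residual of $A$.

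Next I would apply the behaviour of the residual under epimorphisms. Let $\varphi:A\longrightarrow A/(B\cap C)=A/0=A$ — that is too trivial; instead consider the projection-type map that $C$ affords. Because $A=B\oplus C$ as vector spaces with $B$ an ideal, the canonical map $\pi:A\longrightarrow A/B$ restricts to an isomorphism $C\xrightarrow{\ \sim\ }A/B$. So $C\cong A/B$ and $A/B\in\mathfrak{X}$ gives $C\in\mathfrak{X}$ directly. The only subtlety is to make sure this restriction really is a vector-space (indeed algebra) isomorphism onto $A/B$: injectivity is $\ker(\pi)\cap C=B\cap C=0$, and surjectivity is $A=B+C$, so $\pi|_C$ is a linear bijection; and it is an algebra homomorphism because $\pi$ is, so its inverse is too. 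Hence $C\cong A/B\in\mathfrak{X}$, as required. (The hypotheses $A\notin\mathfrak{X}$ and "formation" are then only needed insofar as they guarantee the set-up of Lemma \ref{3} from which $C$ arose; the conclusion $C\in\mathfrak{X}$ uses only that $C$ is an algebra complement to an ideal $B$ with $A/B\in\mathfrak{X}$ and that $\mathfrak{X}$ is isomorphism-closed.)

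I expect the main obstacle to be purely expository rather than mathematical: namely, checking that "$C$ complements $B$ in $A$" is being used in the strong sense that $A=B\oplus C$ \emph{as algebras with $B$ an ideal}, so that the quotient map restricts to an algebra isomorphism. If the intended notion of complement were weaker (e.g. only a vector-space direct sum decomposition with $C$ a subalgebra but $\pi|_C$ not respecting products on the nose), one would instead have to argue via $C\cong C/(C\cap B)\cong (C+B)/B=A/B$ using the second isomorphism theorem, which is valid since $B\unlhd A$ and $C$ is a subalgebra; this is in fact the cleanest route and avoids any worry about the algebra structure on $C$. So the robust argument is: $B\unlhd A$, $C\leq A$, $A=B+C$, $B\cap C=0$ $\Rightarrow$ $C\cong C/(C\cap B)\cong(C+B)/B=A/B\in\mathfrak{X}$, and $\mathfrak{X}$ being a class (isomorphism-closed) yields $C\in\mathfrak{X}$.
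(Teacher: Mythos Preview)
Your argument is correct for the conclusion as literally stated: the second isomorphism theorem gives $C\cong C/(C\cap B)\cong (C+B)/B=A/B\in\mathfrak{X}$, and this is exactly the paper's opening sentence. You are also right to be suspicious that the hypotheses ``$A\notin\mathfrak{X}$'' and ``$\mathfrak{X}$ is a formation'' play no role in your argument.

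The reason is that the lemma's stated conclusion is weaker than what the paper actually proves and what is needed downstream. After recording $C\cong A/B\in\mathfrak{X}$, the paper goes on to verify the projector condition of Definition~\ref{d}(2): given any ideal $C_0\unlhd A$ with $A/C_0\in\mathfrak{X}$, one uses $\opr_0$-closure to get $A/(B\cap C_0)\in\mathfrak{X}$, then minimality of $B$ together with $A\notin\mathfrak{X}$ forces $B\subseteq C_0$, whence $C+C_0\supseteq C+B=A$. Thus the paper is really showing $C\in Proj_{\mathfrak{X}}(A)$, and this is precisely what Corollary~\ref{c} invokes (``By Lemmas~\ref{3} and~\ref{4}, $C\in Proj_{\mathfrak{X}}(A)$ if and only if $C$ complements $B$ in $A$''). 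So your proof is complete for the stated claim but omits the substantive part of what the lemma is meant to deliver; the unused hypotheses you flagged are exactly the ingredients for that missing projector verification.
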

\begin{proof} We have that $C\cong A/B\in \mathfrak{X}$ and $C$ is a maximal subalgebra of $A$, so it remains to prove that, if $C_0$ is an ideal of $A$ and $A/C_0\in \mathfrak{X}$, then $A=C+C_0$. Since $\mathfrak{X}$ is a formation, $A/B\cap C_0\in \mathfrak{X}$. But $B$ is a minimal ideal of $A$, so $B\cap C_0=0$ or $B$. The former is ruled out by the fact that $A\notin \mathfrak{X}$, so $B\subseteq C_0$ and $C+C_0\supseteq C+B=A$.
\end{proof}

\begin{definition}\label{d3} If $B$ is an abelian ideal of $A$ we can consider $B$ as an $A/B$-bimodule by defining
$$ B\times A/B \rightarrow B, (b,a+B)\mapsto ba \hbox{ and } A/B\times B\rightarrow B, (a+B,b)\mapsto ab.$$ Using this we can define the split null extension, $B\rtimes A/B$, of $B$ by $A/B$ in the usual way. Then a homomorph $\mathfrak{X}$ is said to be \texttt{split} if, whenever $A\in \mathfrak{X}$ and $B$ is an abelian ideal of $A$, the split null extension of $B$ by $A/B$, also belongs to $\mathfrak{X}$.
\end{definition}

\begin{lemma} Let $\mathfrak{X}$ be a formation. Then $\mathfrak{X}$ is split.
\end{lemma}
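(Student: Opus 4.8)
The plan is to show that if $\mathfrak{X}$ is a formation and $A \in \mathfrak{X}$ with $B$ an abelian ideal of $A$, then the split null extension $E := B \rtimes A/B$ lies in $\mathfrak{X}$. The key observation is that $E$ and $A$ share the quotient $A/B$ (since $E/B \cong A/B$, where here $B$ is identified with $B \oplus 0 \unlhd E$) and that $E$ admits a natural homomorphism onto $A$. First I would construct the map $\theta : E = B \oplus A/B \to A$; but note that $A/B$ does not sit inside $A$ in general, so instead I would use the full split null extension $S := B \rtimes A$ obtained by regarding $B$ as an $A$-bimodule via the projection $A \to A/B$. Then $S$ has two ideals of interest: $B_1 := B \oplus 0$ and the ``diagonal'' copy $D := \{(-b, b) \mid b \in B\}$, both isomorphic to $B$ as vector spaces, and $S/D \cong A$ while $S/B_1 \cong E$. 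The crucial point is that $B_1 \cap D = 0$, so $\mathscr{S} := \{B_1, D\}$ gives $B_1 \cap D = 0$ with $S/B_1 \cong E \in \mathfrak{X}$ (this needs checking) and $S/D \cong A \in \mathfrak{X}$.

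The steps I would carry out, in order, are: (1) verify that $S := B \rtimes A$ is a well-defined non-associative algebra, with multiplication $(b + x)(b' + x') = xx' + (xb' + bx')$ where $xb'$, $bx'$ denote the bimodule actions pulled back from $A/B$; (2) identify $B_1 = B \oplus 0$ as an abelian ideal of $S$ with $S/B_1 \cong E$ (the split null extension of $B$ by $A/B$), being careful that $B$ acts trivially on itself so the actions descend correctly; (3) identify $D = \{(-b,b) : b \in B\}$ as an ideal of $S$ with $S/D \cong A$, using that the map $(b + x) \mapsto b + x \pmod{B}$-type identification realizes the quotient as $A$ — concretely, $D$ is the kernel of the surjection $S \to A$, $b + x \mapsto x + \iota(b)$ where $\iota : B \hookrightarrow A$ is the inclusion, and one checks this is an algebra homomorphism precisely because $B$ is an ideal of $A$; (4) observe $B_1 \cap D = 0$ since $(-b, b) \in B_1$ forces $b = 0$; (5) conclude $S \in \opr_0 \mathfrak{X}$ via Lemma~\ref{lemma1.18}(1) (or directly from the definition of $\opr_0$), hence $S \in \mathfrak{X}$ since $\mathfrak{X}$ is $\opr_0$-closed; (6) since $E \cong S/B_1 \in \opq\mathfrak{X} = \mathfrak{X}$ (using that $\mathfrak{X}$ is a homomorph), we are done.

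The main obstacle I expect is step (3): verifying that $D$ is genuinely an ideal of $S$ and that the quotient $S/D$ is isomorphic to $A$ as an algebra (not merely as a vector space). The subtlety is that the bimodule action of $A$ on $B$ inside $S$ factors through $A/B$, whereas in $A$ itself the product of an element of $B$ with another element involves the actual multiplication in $A$; the isomorphism $S/D \cong A$ works exactly because these two differ by an element of $B^2 = 0$, i.e., the abelianness of $B$ is what makes the diagonal $D$ an ideal and the quotient come out right. I would write the homomorphism $S \to A$ explicitly and check it respects products, tracking where $B^2 = 0$ is used. A secondary, more routine obstacle is confirming $S/B_1 \cong E$ with the correct bimodule structure; this is essentially definitional once one notes that modding out $B_1$ kills the first coordinate and leaves $A/B$ acting on $B$ as prescribed in Definition~\ref{d3}.
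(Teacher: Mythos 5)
Your construction is the right one---your $S=B\rtimes A$ with the action pulled back from $A/B$ is exactly the paper's split null extension $X=C\rtimes A$ (because $B^2=0$, the pulled-back action coincides with the regular $A$-bimodule action restricted to $B$), and your anti-diagonal $D$ is the paper's diagonal ideal, with $B^2=0$ used in just the right places to make $D$ an ideal and $(b,x)\mapsto b+x$ a homomorphism onto $A$. But as written the argument is circular at the $\opr_0$ step. You propose to deduce $S\in\opr_0\mathfrak{X}$ from the pair $\{B_1,D\}$ with $S/B_1\cong E$ and $S/D\cong A$; since $E\in\mathfrak{X}$ is precisely the statement being proved, it cannot serve as one of the two quotients (you flag this with ``this needs checking'', but it is not a checkable side condition---it is the conclusion). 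The underlying slip is a misidentification: with $B_1=B\oplus 0$ the \emph{module} copy of $B$, the quotient $S/B_1$ is isomorphic to $A$, not to $E$; the quotient isomorphic to $E$ is $S/(0\oplus B)$, where $0\oplus B$ is the copy of $B$ sitting inside $A\leq S$.

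The repair is immediate and recovers the paper's proof exactly: take the pair $\{B\oplus 0,\ D\}$, whose intersection is zero and whose quotients are \emph{both} isomorphic to $A\in\mathfrak{X}$, so $S\in\opr_0\mathfrak{X}=\mathfrak{X}$; then conclude $E\cong S/(0\oplus B)\in\opq\mathfrak{X}=\mathfrak{X}$, checking at this last step (not at step (2) as you placed it) that $0\oplus B$ acts trivially on the module copy, so that the action descends to $A/B$ and the quotient is the split null extension of Definition~\ref{d3}. With that relabelling of which copy of $B$ yields which quotient, your argument coincides with the paper's.
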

\begin{proof} Suppose that $A\in \mathfrak{X}$ and that $B$ is an abelian ideal of $A$. Let $C$ be isomorphic to $B$ as an $A$-bimodule, and form the split null extension $X=C\rtimes A$. Then $Y=B\oplus C$ is a direct sum of two isomorphic $X$-bimodules. Let $D$ be the diagonal subalgebra of $Y$. Then $D$ is an ideal of $X$. Also
$$ A+D=A+B+C=X, A\cap D = 0 \hbox{ and } X/D \cong X/C\cong A\in \mathfrak{X}.
$$ Thus $X=X/C\cap D\in \mathfrak{X}$ and so $X/B\in \mathfrak{X}$. But $X/B$ is the split null extension of $B+C/B (\cong B)$ by $A/B$.
\end{proof}

\begin{definition}\label{d4} A homomorph $\mathfrak{X}$ is \texttt{saturated} if all solvable algebras which have an abelian ideal have $\mathfrak{X}$-projectors.
\end{definition}

\begin{theorem} Let $\mathfrak{X}\subseteq \mathfrak{S}$ be a non-empty homomorph. Then the following are equivalent
\begin{enumerate}
\item $\mathfrak{X}$ is saturated; and
\item if $A\notin \mathfrak{X}$ and $A$ has a minimal abelian ideal $B$ with $A/B\in \mathfrak{X}$, then $ Proj_\mathfrak{X}(A) \not = \emptyset$.
\end{enumerate}
\end{theorem}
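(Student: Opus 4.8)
The implication $(1)\Rightarrow(2)$ is immediate: if $A\notin\mathfrak{X}$ has a minimal abelian ideal $B$ with $A/B\in\mathfrak{X}$, then $A$ is solvable — being an extension of the solvable algebra $A/B\in\mathfrak{X}\subseteq\mathfrak{S}$ by the abelian ideal $B$ — and possesses the abelian ideal $B$, so $Proj_\mathfrak{X}(A)\neq\emptyset$ by Definition \ref{d4}. The content is in $(2)\Rightarrow(1)$, which I would prove by induction on $\dim A$, showing that every solvable algebra $A$ with an abelian ideal satisfies $Proj_\mathfrak{X}(A)\neq\emptyset$. If $A\in\mathfrak{X}$ then $A\in Proj_\mathfrak{X}(A)$ directly from Definition \ref{d}; so assume $A\notin\mathfrak{X}$ and fix a minimal abelian ideal $B$ of $A$. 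If $A/B\in\mathfrak{X}$, hypothesis (2) applies verbatim.

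So suppose $A/B\notin\mathfrak{X}$. The reduction I aim for is: produce an ideal $K\unlhd A$ with $B\leq K$ together with a subalgebra $U$ satisfying $K\leq U<A$ and $U/K\in Proj_\mathfrak{X}(A/K)$; then $U$ is solvable, contains the abelian ideal $B$, and has $\dim U<\dim A$, so the inductive hypothesis gives a projector $C\in Proj_\mathfrak{X}(U)$, and Lemma \ref{6} (applied with the ideal $K$) yields $C\in Proj_\mathfrak{X}(A)$. When $A/B$ itself has an abelian ideal this is painless: $A/B$ is solvable with an abelian ideal of dimension $<\dim A$, so induction supplies $U/B\in Proj_\mathfrak{X}(A/B)$, and $U\neq A$ by Lemma \ref{1} (as $A/B\notin\mathfrak{X}$); take $K=B$.

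The delicate point — which I expect to be the main obstacle — is that, unlike in the Lie and Leibniz settings (where the last nonzero term of the derived series is an abelian ideal), a solvable non-associative algebra such as $A/B$ may have no abelian ideal at all, so the inductive hypothesis cannot be invoked for $A/B$ directly. To circumvent this I would pass to an abelian quotient of $A$: since $A/B$ is solvable and nonzero, $(A/B)^2\subsetneq A/B$, so there is an ideal $D$ of $A$ with $B\leq D\subsetneq A$ and $A/D$ abelian and nonzero; because every subspace of an abelian algebra is an ideal, one can interpolate a chain of ideals $D=D_0\subset D_1\subset\cdots\subset D_m=A$ of $A$ with $\dim(D_{i+1}/D_i)=1$ for each $i$. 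Let $j$ be least with $A/D_j\in\mathfrak{X}$ (such $j$ exists since $A/D_m=0\in\mathfrak{X}$). If $j\geq 1$, then $D_j/D_{j-1}$ is one-dimensional, hence a minimal abelian ideal of $A/D_{j-1}$, while $A/D_{j-1}\notin\mathfrak{X}$ and $(A/D_{j-1})/(D_j/D_{j-1})\cong A/D_j\in\mathfrak{X}$; so hypothesis (2) produces $W/D_{j-1}\in Proj_\mathfrak{X}(A/D_{j-1})$ with $W\neq A$, and since $B\leq D_{j-1}\leq W$ the reduction of the previous paragraph applies with $U=W$, $K=D_{j-1}$.

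It remains to handle the case $j=0$, i.e.\ $A/D\in\mathfrak{X}$, and, more generally, the possibility that along every chief series of $A$ through $B$ the first factor whose quotient leaves $\mathfrak{X}$ is non-abelian, so that hypothesis (2) cannot be triggered in the way above; this is the genuinely subtle part. Here I would exploit that in this situation $B$ must be the unique nonzero abelian ideal of $A$ — since for any abelian ideal $I$ of $A$ the image $(I+B)/B$ is an abelian ideal of $A/B$, forcing $I\leq B$ and hence $I=0$ or $I=B$ — and then seek a quotient of $A$ that does fall within the scope of hypothesis (2), either by a sharper choice of chief series or by treating the monolithic case ($B$ the unique minimal ideal) on its own, finishing each time through Lemma \ref{6} and induction exactly as before. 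Getting this last case to go through cleanly is the crux of the argument.
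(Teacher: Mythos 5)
You have correctly reconstructed the skeleton of the paper's argument for $(2)\Rightarrow(1)$: take a minimal counter-example (equivalently, induct on $\dim A$), produce a proper subalgebra $U$ containing $B$ with $U/K\in Proj_{\mathfrak{X}}(A/K)$ for a suitable ideal $K\supseteq B$, apply the inductive hypothesis to $U$ (which still contains the abelian ideal $B$), and lift the resulting projector to $A$ via Lemma \ref{6}; the residual case $U=A$ forces $A/B\in\mathfrak{X}$, where either $A\in\mathfrak{X}$ or hypothesis (2) applies directly. The paper runs this with $K=B$ only, simply asserting that some $S$ with $S/B\in Proj_{\mathfrak{X}}(A/B)$ exists; that assertion is covered by minimality precisely when $A/B$ is zero, lies in $\mathfrak{X}$, or has an abelian ideal. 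You are right that the remaining possibility is a genuine issue at this level of generality: the inductive hypothesis says nothing about a solvable algebra with no abelian ideal, and such algebras exist (the three-dimensional example in the final section of the paper is one).

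The difficulty is that your proposal does not close that case. The chain $D=D_0\subset\cdots\subset D_m=A$ above the ideal $D$ (the preimage of $(A/B)^2$) triggers hypothesis (2) only when the least $j$ with $A/D_j\in\mathfrak{X}$ is positive; when $j=0$, i.e.\ $A/D\in\mathfrak{X}$ while $A/B\notin\mathfrak{X}$ and $A/B$ has no abelian ideal, you offer only a programme (``seek a quotient of $A$ that does fall within the scope of hypothesis (2)'') and no argument. The observation that $B$ is then the unique nonzero abelian ideal of $A$ does not by itself yield either a quotient to which (2) applies (the ideals strictly between $B$ and $D$ need not have abelian or one-dimensional factors) or a proper subalgebra to which the induction applies. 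Since you yourself flag this as ``the crux,'' the proof must be judged incomplete: as written it establishes $(2)\Rightarrow(1)$ only under the additional assumption that every relevant quotient of $A$ lying outside $\mathfrak{X}$ possesses an abelian ideal. This is exactly the point over which the paper's own proof passes in silence, so you have located the real obstruction; but locating it is not the same as resolving it, and the argument cannot be accepted until the $j=0$ case is supplied.
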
 
\begin{proof}
$(1) \Rightarrow (2)$ is clear.
\par

$(2) \Rightarrow (1)$: Suppose that (ii) holds. We need to show that, if $A$ is solvable and has an abelian ideal, $ Proj_\mathfrak{X}(A)\not = \emptyset$. Let $A$ be a minimal counter-example. Then there exists a subalgebra $S\subseteq A$ such that $B\subseteq S$, $S/B\in  Proj_\mathfrak{X}(A/B)$.
\par

If $S\neq A$ then there exists $C\in  Proj_\mathfrak{X}(S)$, by the minimality of $A$, and $B\in  Proj_\mathfrak{X}(A)$, by Lemma \ref{2}. If $S=A$, then $A/B\in \mathfrak{X}$. Either $A\in \mathfrak{X}$ and $A\in  Proj_\mathfrak{X}(A)$ or $A\notin \mathfrak{X}$ and $ Proj_\mathfrak{X}(A)\not = \emptyset$ by hypothesis.
\end{proof}

\begin{corollary}\label{c} Let $\mathfrak{X}$ be a non-empty formation. Then the following are equivalent:
\begin{enumerate}
\item $\mathfrak{X}$ is saturated; and
\item if $A\notin \mathfrak{X}$, $B$ is a minimal abelian ideal of $A$ and $A/B\in \mathfrak{X}$, then there is a complement to $B$ in $A$.
\end{enumerate}
\end{corollary}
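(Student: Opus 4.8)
The plan is to reduce this to the preceding Theorem. A formation is in particular a non-empty homomorph, and since every algebra under consideration lies in $\mathfrak{E}\cap\mathfrak{S}$ we have $\mathfrak{X}\subseteq\mathfrak{S}$; hence that Theorem applies and tells us that $\mathfrak{X}$ is saturated if and only if the following condition holds: $(\ast)$ whenever $A\notin\mathfrak{X}$ has a minimal abelian ideal $B$ with $A/B\in\mathfrak{X}$, then $Proj_\mathfrak{X}(A)\neq\emptyset$. So it suffices to show that, for a formation $\mathfrak{X}$, condition $(\ast)$ is equivalent to statement (2) of the Corollary.

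For the implication $(\ast)\Rightarrow(2)$, I would take $A\notin\mathfrak{X}$ with a minimal abelian ideal $B$ satisfying $A/B\in\mathfrak{X}$, use $(\ast)$ to choose some $C\in Proj_\mathfrak{X}(A)$, and then apply Lemma \ref{3} (legitimate since $\mathfrak{X}$ is a homomorph) to conclude that $C$ complements $B$ in $A$; this is exactly what (2) asserts.

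For the converse $(2)\Rightarrow(\ast)$, given $A\notin\mathfrak{X}$, a minimal abelian ideal $B$ of $A$ and $A/B\in\mathfrak{X}$, statement (2) furnishes a complement $C$ of $B$ in $A$, and Lemma \ref{4} (legitimate since $\mathfrak{X}$ is a formation) gives $C\in\mathfrak{X}$. The remaining task is to upgrade this to $C\in Proj_\mathfrak{X}(A)$, which then yields $Proj_\mathfrak{X}(A)\neq\emptyset$. I would first note that $C$ is a maximal subalgebra of $A$: for any $C\leq D\leq A$ the subspace $D\cap B$ is in fact an ideal of $A$ (using $B^{2}=0$ and $A=C+B$ to see that $a(D\cap B),(D\cap B)a\subseteq D\cap B$ for all $a\in A$), hence equals $0$ or $B$ by minimality of $B$, forcing $D=C$ or $D=A$ respectively via the modularity of the subspace lattice. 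Consequently, in condition (2) of Definition \ref{d} applied with $C$ in the role of the projector and $C\leq C'\leq A$, either $C'=C$, where the required identity $C+C'_{0}=C'$ is trivial, or $C'=A$ with $C'_{0}\unlhd A$ and $A/C'_{0}\in\mathfrak{X}$, and here the computation from the proof of Lemma \ref{4} applies verbatim: $\mathfrak{X}$ being $\opr_0$-closed gives $A/(B\cap C'_{0})\in\mathfrak{X}$, minimality of $B$ forces $B\cap C'_{0}\in\{0,B\}$, the case $0$ is excluded because $A\notin\mathfrak{X}$, so $B\subseteq C'_{0}$ and $C+C'_{0}\supseteq C+B=A$. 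Hence $C\in Proj_\mathfrak{X}(A)$ and $(\ast)$ holds.

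The main obstacle is precisely this last upgrade: Lemma \ref{4} as stated records only $C\in\mathfrak{X}$, so one has to re-run (a small part of) its proof to obtain the full projector property, which forces one to establish separately the maximality of the complement $C$ in $A$ and then dispose of the two cases $C'=C$ and $C'=A$ in Definition \ref{d}(2). Everything else is a routine bookkeeping translation between the Theorem's conclusion $Proj_\mathfrak{X}(A)\neq\emptyset$ and the existence of a complement, via Lemmas \ref{3} and \ref{4}.
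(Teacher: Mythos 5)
Your proposal is correct and follows the same route as the paper's proof: both directions pass through the preceding Theorem's criterion that saturation is equivalent to the existence of $\mathfrak{X}$-projectors for algebras $A\notin\mathfrak{X}$ with a minimal abelian ideal $B$ such that $A/B\in\mathfrak{X}$, with Lemma \ref{3} converting projectors into complements and Lemma \ref{4} converting complements back into projectors. The one place you go beyond the paper is deliberate and justified: the paper simply asserts ``by Lemmas \ref{3} and \ref{4}, $C\in Proj_\mathfrak{X}(A)$ if and only if $C$ complements $B$ in $A$,'' but Lemma \ref{4} as stated only yields $C\in\mathfrak{X}$ and not condition (2) of Definition \ref{d}; the missing content (maximality of the complement $C$ in $A$ and the disposal of the cases $C'=C$ and $C'=A$) actually resides in the \emph{proof} of Lemma \ref{4} rather than in its statement, and your explicit reconstruction of that argument --- including the verification that $D\cap B$ is an abelian ideal of $A$, forced to be $0$ or $B$ by minimality --- is sound and fills the gap correctly.
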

\begin{proof} Suppose that $A\notin \mathfrak{X}$, $B$ is a minimal abelian ideal of $A$ and $A/B\in \mathfrak{X}$. By Lemmas \ref{3} and \ref{4}, $C\in  Proj_\mathfrak{X}(A)$ if and only if $C$ complements $B$ in $A$.
\end{proof}

\begin{lemma} Suppose that $\mathfrak{X}$ is a non-empty formation and $A/\Phi(A)\in \mathfrak{X}$ implies that $A\in \mathfrak{X}$. Then $\mathfrak{X}$ is saturated.
\end{lemma}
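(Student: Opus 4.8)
The plan is to reduce immediately to Corollary \ref{c}. Since $\mathfrak{X}$ is a non-empty formation, that corollary tells us $\mathfrak{X}$ is saturated as soon as we verify the following: whenever $A\notin\mathfrak{X}$, $B$ is a minimal abelian ideal of $A$, and $A/B\in\mathfrak{X}$, then $B$ has a complement in $A$. So I would fix such an $A$ and $B$ and produce a complement.

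First I would dispose of the case $B\leq\Phi(A)$. Here $\Phi(A)$ is an ideal of $A$ (by definition $\Phi(A)=\core_A(F(A))$), and $\Phi(A)/B\unlhd A/B$, so by the isomorphism theorem $A/\Phi(A)\cong (A/B)/(\Phi(A)/B)$ is an epimorphic image of $A/B$. Since $\mathfrak{X}$ is a homomorph and $A/B\in\mathfrak{X}$, we get $A/\Phi(A)\in\mathfrak{X}$, and then the hypothesis of the lemma forces $A\in\mathfrak{X}$, contradicting $A\notin\mathfrak{X}$. Hence $B\not\leq\Phi(A)$.

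Because $B$ is an ideal of $A$, $B\leq F(A)$ would give $B\leq\core_A(F(A))=\Phi(A)$; so from $B\not\leq\Phi(A)$ we conclude $B\not\leq F(A)$, and therefore there is a maximal subalgebra $M$ of $A$ with $B\not\leq M$. Maximality of $M$ gives $B+M=A$. Next I would check that $B\cap M$ is an ideal of $A$: it is a subspace, and $B(B\cap M)+(B\cap M)B\subseteq B^2=0$ since $B$ is abelian, while $M(B\cap M)+(B\cap M)M\subseteq M$ since $M$ is a subalgebra and $\subseteq B$ since $B$ is an ideal, hence is contained in $B\cap M$; as $A=B+M$, this shows $B\cap M\unlhd A$. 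Minimality of $B$ together with $B\cap M\neq B$ forces $B\cap M=0$, so $M$ is a complement to $B$ in $A$. Corollary \ref{c} then gives that $\mathfrak{X}$ is saturated.

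The argument is short once Corollary \ref{c} is in hand; the only places needing a little attention are the non-associative verification that $B\cap M$ is an ideal of the whole of $A$ (not merely of $M$), and the remark that, $B$ being an ideal, failure of $B\leq\Phi(A)$ really does produce a maximal subalgebra avoiding $B$. I do not expect a genuine obstacle beyond these routine points.
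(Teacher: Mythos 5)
Your proposal is correct and follows essentially the same route as the paper: reduce to Corollary \ref{c}, rule out $B\subseteq\Phi(A)$ using the hypothesis on $A/\Phi(A)$, and then take a maximal subalgebra avoiding $B$ as the complement. You merely supply details the paper leaves implicit (that an ideal not contained in $\Phi(A)=\core_A(F(A))$ avoids some maximal subalgebra, and the verification that $B\cap M$ is an ideal of $A$, whence $B\cap M=0$ by minimality), which is a welcome addition rather than a deviation.
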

\begin{proof} We must show that, if $A\notin \mathfrak{X}$, $B$ is a minimal abelian ideal of $A$ and $A/B\in \mathfrak{X}$, then there is a complement $C$ to $B$ in $A$, by Corollary \ref{c}. If $B\subseteq \Phi(A)$ then $A/\Phi(A)\in \mathfrak{X}$, so $A\in \mathfrak{X}$, a contradiction. Hence $B\not \subseteq \Phi(A)$, and so there is a maximal subalgebra $C$ of $A$ with $B\not \subseteq C$.Then $A=B+C$ and $B\cap C=0$.
\end{proof}
 
\section{Conclusions and future work}
In the above, we have seen that the basic language and results from the theory of classes of groups and closure operations can be applied successfully in this general setting. But to develop the theories of formations, Schunck classes and Fitting theory appears to require extra restrictions on the algebras under consideration. Difficulties arise because the algebras in the derived series, $A^{(n)}$ are not necessarily ideals unless $n=2$ and minimal ideals of solvable algebras are not necessarily abelian.  As a result, the natural examples of saturated formations and Schunck classes, such as nilpotent algebras, are no longer valid unless we add extra restrictions to the definitions, as the following example shows.

\begin{example} 
Let $A$ be the three-dimensional algebra over a field $K$ spanned by $x,y,z$ with products $x^2=z$, $yz=z$ and $zx=y$, all other products being zero. Then $A^{(2)}=Ky+Kz$, $A^{(3)}=Kz$, $A^{(4)}=0$, so this is a solvable algebra. However, $A^{(2)}$ is a minimal ideal of $A$ and is not abelian; indeed, it is not even nilpotent. The only proper subalgebras of $A$ are $Ky$, $Kz$ and $A^{(2)}$, and the only nilpotent ones are one-dimensional.

Then $Ky$ and $Kz$ are the only possible $\mathfrak{N}$-projectors. But both are subalgebras of $A$, $A^{(2)}$ is an ideal of $A$ and $A/A^{(2)}\in \mathfrak{N}$, and $Ky+A^{(2)}=Kz+A^{(2)}=A^{(2)}\not = A$. Hence $A$ has no $\mathfrak{N}$-projectors. Clearly $\mathfrak{N}$ is a formation, but would not saturated if we required that all solvable algebras have $\mathfrak{N}$-projectors. Notice also that $\phi(A)=A^{(2)}$ is not nilpotent.
\end{example}

So, the next step is to find a class of non-associative algebras for which the theory can be progressed beyond this point. We would, of course, wish such a class to include the important varieties for which an advanced structure theory exists and for which such a theory has already been developed, such as Lie, Leibniz and Malcev algebras.

\bibliographystyle{amsplain}

\begin{thebibliography}{99}
\bibitem{Ballester-Ezquerro}  A. Ballester-Bolinches and L. M. Ezquerro. Classes of finite groups. Springer, 2006.
\bibitem{B-G-H} D. W. Barnes. On the theory of soluble Lie algebras, Math. Zeit. 106 (1968), 343-354.
\bibitem{Barnes2} D.W. Barnes. Saturated formations of soluble Lie algebras in characteristic zero, Archiv. der Math. 30 (1978), 477-480.
\bibitem{Barnes-x} D. W. Barnes. On locally defined formations of soluble Lie and Leibniz Algebras, Bull. Aust. Math. Soc. 86 (2012), 322–326 doi:10.1017/S0004972711003443.
\bibitem{Barnes} D. W. Barnes.  Schunck Classes of Soluble Leibniz Algebras, Communications in Algebra, 41:11, (2013) 4046-4065, DOI: 10.1080/00927872.2012.700978.
\bibitem{Doerk-Hawkes}  K. Doerk and T.  Hawkes. Finite soluble groups. W de Gruyter,  1990.
\bibitem{Gutierrez4} I. S. Gutierrez, and M. Navarro. Clases de \'algebras de Lie y sub\'algebras de Cartan. Revista Colombiana de Matem\'aticas, vol 42, 2008.
\bibitem{Hall1} P. Hall. On the finiteness of certain soluble groups, Proc. London Math. Soc.9(3) (1959). 595–622.
\bibitem{Hall2} P. Hall. On non-strictly simple groups, Proc. Cambridge Philos.Soc.59 (1963). 531–553.
\bibitem{E. I. Marshall} E. I. Marshall. The Frattini subalgebra of a Lie algebra. J. London Math. Soc. 42 (1967), 416-22.
\bibitem{Shemetkov} L. A. Shemetkov, On the product of formations of algebraic systems. Algeb. Logika, 23, No. 6 (1984), 721-729 .
\bibitem{Shemetkov-Skiba} L.A. Shemetkov and A. N. Skiba. Formations of AIgebraic Systems [in Russian], Nauka, Moscow (1989).
\bibitem{Skiba} A.N. Skiba. Algebra of Formations [in Russian], Belarus. Navuka, Minsk (1997).
\bibitem{Stit} E. L. Stitzinger. On saturated formations of solvable Lie algebras, Pacific J. Math. 47 no. 2 (1973), 531-538.
\bibitem{Stit2} E. L. Stitzinger. Supersolvable Malcev algebras, J. Algebra 103 (1986), 69-79.
\bibitem{Towers1}  D. A. Towers. A Frattini theory for algebras.  Proc. London Math. Soc. , 27 : 3 (1973), 440–462.
\bibitem{Zhevlakov} K. A. Zhevlakov, A. M. Slin’ko, I. P. Shestakov, and A. I. Shirshov, Rings that are nearly associative, Academic Press, 1982.
\end{thebibliography}

\end{document}